      \theoremstyle{plain}
      \newtheorem{theorem}{Theorem}[section]
      \newtheorem{lemma}[theorem]{Lemma}
      \theoremstyle{definition}
      \newtheorem{definition}[theorem]{Definition}
      \theoremstyle{remark}
      \newcommand{\R}{{\mathbb R}}
      \def\@setcopyright{}
      \def\serieslogo@{}
\begin{document}

   \author{Fei He}
   \address{School of Mathematics, University of Minnesota, Twin Cities}
   \email{hexxx221@umn.edu}

   \title[]{Some rigidity results for noncompact gradient steady Ricci solitons and Ricci-flat manifolds}

   \begin{abstract}
     Gradient steady Ricci solitons are natural generalizations of Ricci-flat manifolds. In this article, we prove a curvature gap theorem for gradient steady Ricci solitons with nonconstant potential functions; and a curvature gap theorem for Ricci-flat manifolds, removing the volume growth assumptions in known results.
   \end{abstract}

   \subjclass{}

   \keywords{}

   \thanks{This research was partially supported by NSF grant DMS-1109342 (P.I.: Chuu-Lian Terng).}
   \thanks{}


   \date{\today}

   \maketitle

   \section{Introduction}
A gradient Ricci soliton is a triple $(M, g, f)$, where $g$ is a Riemannian metric on a smooth manifold $M$, and $f$ is a smooth function satisfying
\[ Ric(g)+ Hess(f)= \lambda g.
\]
It is called shrinking, steady or expanding when $\lambda $ is positive, $0$ or negative respectively. As the name suggested, Ricci solitons came from the study of Ricci flows, as self-similar solutions and important singularity models. Meanwhile, they are natural generalizations of Einstein manifolds, i.e. Riemannian manifolds $(M,g)$ satisfying
 \[ Ric(g)=\lambda g,
 \]
 for some constant $\lambda$, this is the viewpoint that we take in this article.

We will study gradient steady Ricci solitons, including Ricci-flat manifolds as a special case. Indeed, any Ricci-flat manifold can be viewed as a gradient steady Ricci soliton with constant potential function. However, the potential function for a Ricci-flat steady Ricci soliton is not necessarily constant, for example, $(\R^n,g_{\scriptscriptstyle E}, x_1)$, where $x_1$ is the first coordinate function. We call a gradient steady Ricci soliton \textit{nontrivial} if its potential function is nonconstant. Since it is known that compact steady Ricci solitons are Ricci-flat, our focus here is on the noncompact case.

Gradient Ricci solitons and Einstein manifolds are very rigid because their Riemann curvature tensors satisfy elliptic systems. We will not survey the various rigidity results, instead, we only recall those closely related to ours. For Ricci-flat manifolds with maximal volume growth, a curvature gap theorem has been implied by the work of Bando, Kasue and Nakajima~\cite{Bando-Kasue-Nakajima1989}, and independently proved by Shen in \cite{Shen1990}, the proof depends on a Euclidean type Sobolev inequality. Minerbe generalized this gap theorem to Ricci-flat manifolds with much weaker volume growth assumptions in~\cite{Minerbe2009}, by establishing weighted Sobolev and Poincare inequalities. Similar results with no volume growth assumption have been obtained by Carron (\cite{Carron2010}), in the more general setting of critical metrics, the proof only relies on local regularity estimates of the Riemann curvature tensor. Our first goal is to generalize these results to noncompact gradient steady Ricci solitons. We prove the following:

\begin{theorem}
\label{gapsoliton}
There exist constants $c(n)$ and $\epsilon(n)$ depending only on $n$, such that for any complete noncompact gradient steady Ricci soliton $(M,g,f)$ with dimension $n \geq 3$, and $R+|\nabla f|^2=\Lambda>0$ where $R$ is the scalar curvature, if
\[ \int_M |Rm|^\frac{n}{2}r^nV_f(r)^{-1}e^{c(n)\sqrt{\Lambda}r}dv_f < \epsilon(n)e^{-f(p)},
\]
where $r(x)=dist(p,x)$ for some $p \in M$, $V_f(r)=\int_{B_p(r)}dv_f$, $dv_f=e^{-f}dv$, then $(M,g)$ is flat, and the pullback of $f$ to the universal cover is a linear function.
\end{theorem}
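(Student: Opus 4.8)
The plan is to exploit the fact that a steady soliton is Bakry--' Emery Ricci flat and to reduce the statement to a Moser-type absorbing estimate for $|Rm|$ driven by the precise weight appearing in the hypothesis. I would begin with the structure equations. Since $\lambda=0$, the soliton equation reads $Ric+Hess(f)=0$, i.e. the Bakry--' Emery tensor $Ric_f:=Ric+Hess(f)$ vanishes identically. Tracing gives $R+\Delta f=0$, and the standard computation $\nabla(R+|\nabla f|^2)=2Ric(\nabla f)-2Ric(\nabla f)=0$ recovers Hamilton's identity $R+|\nabla f|^2=\Lambda$, so the hypothesis $\Lambda>0$ is just a normalization. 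Recalling that complete gradient steady solitons have $R\geq 0$, we obtain $|\nabla f|^2=\Lambda-R\leq\Lambda$, hence $f$ is $\sqrt{\Lambda}$-Lipschitz; in particular $f$ oscillates by at most $\sqrt{\Lambda}\rho$ on any ball of radius $\rho$, so $dv_f=e^{-f}dv$ is comparable to $dv$ on a fixed ball up to a factor $e^{\pm\sqrt{\Lambda}\rho}$. I would also record $\Delta_f f=\Delta f-|\nabla f|^2=-\Lambda$, where $\Delta_f=\Delta-\langle\nabla f,\nabla\,\cdot\,\rangle$ is the drift Laplacian, self-adjoint with respect to $dv_f$.

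Next I would set up the elliptic equation for the curvature. Viewing the soliton as the self-similar profile of the Ricci flow $g(t)=\phi_t^*g$ generated by $\nabla f$ and combining the evolution $\partial_t Rm=\Delta Rm+Q(Rm)$ with $\partial_t Rm=\mathcal{L}_{\nabla f}Rm$, the soliton identities collapse all lower-order terms and yield the schematic elliptic system
\[
\Delta_f Rm = Rm*Rm .
\]
Applying Kato's inequality $|\nabla Rm|\geq\big|\nabla|Rm|\big|$ then gives the differential inequality $\Delta_f|Rm|\geq -c_1(n)|Rm|^2$, valid weakly on all of $M$ (in the barrier sense across the zero set of $|Rm|$). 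This is the subsolution inequality that the gap argument will feed on.

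The crux is a weighted Sobolev inequality adapted to $(M,g,dv_f)$. Because $Ric_f=0$ and $|\nabla f|\leq\sqrt{\Lambda}$, Wei--Wylie comparison yields weighted volume doubling and a weighted Poincar' e inequality on balls, with the $f$-weight contributing an exponential correction $e^{c'\sqrt{\Lambda}r}$; by the Saloff-Coste--Grigor'yan mechanism these combine into a Neumann--Sobolev inequality whose scale-dependent constant behaves like $r^2V_f(r)^{-2/n}e^{c'\sqrt{\Lambda}r}$. I expect this step to be the main obstacle: one must track the exponential factor coming from the unbounded potential and prove the estimate without any a priori volume growth, which is exactly what forces the somewhat exotic weight $r^nV_f(r)^{-1}e^{c(n)\sqrt{\Lambda}r}$ into the hypothesis.

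Finally I would run the absorbing estimate. Testing $\Delta_f|Rm|\geq-c_1|Rm|^2$ against $|Rm|^{\beta}\eta^2$, integrating by parts against $dv_f$, and absorbing the gradient cross-term by Cauchy--Schwarz gives $\int|\nabla(|Rm|^{(\beta+1)/2}\eta)|^2\,dv_f\lesssim\int|Rm|^{\beta+2}\eta^2\,dv_f+(\text{cutoff})$. Writing the critical term as $\int|Rm|^2\,(|Rm|^{(\beta+1)/2}\eta)^2\,dv_f$ and applying H\"older with exponents $\tfrac n2,\tfrac{n}{n-2}$ followed by the weighted Sobolev inequality, the quadratic term is bounded by $\big(\int|Rm|^{n/2}\,dv_f\big)^{2/n}$ times the Sobolev constant times $\|\,|Rm|^{(\beta+1)/2}\eta\|_{2n/(n-2)}^2$. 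Taking the $n/2$-th power, the product of the local Sobolev constant with $\int|Rm|^{n/2}dv_f$ is precisely $\int|Rm|^{n/2}r^nV_f(r)^{-1}e^{c(n)\sqrt{\Lambda}r}\,dv_f$ up to the normalizing factor $e^{-f(p)}$, so choosing $\epsilon(n)$ below the absorption threshold lets one absorb the nonlinearity. Letting $\eta$ exhaust $M$ (the cutoff terms $\int|Rm|^{\beta+1}|\nabla\eta|^2$ vanish in the limit) forces $|Rm|\equiv 0$, so $(M,g)$ is flat. Then $Hess(f)=-Ric=0$, so $\nabla f$ is parallel; pulling back to the universal cover, which is flat and simply connected hence isometric to $\R^n$, a function with parallel gradient is linear, completing the proof.
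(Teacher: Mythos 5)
Your skeleton is the same as the paper's: the elliptic inequality $-\Delta_f|Rm|\le C(n)|Rm|^2$, a weighted Sobolev inequality for $dv_f$, H\"older with exponents $(\tfrac n2,\tfrac n{n-2})$ so that the hypothesis weight $r^nV_f(r)^{-1}e^{c\sqrt\Lambda r}$ appears as the dual weight, then absorption with an exhausting cutoff. But the central step has a genuine gap, which you flag yourself and then do not resolve. What you actually propose is a Neumann--Sobolev inequality on balls $B(r)$ with scale-dependent constant $\sim r^2V_f(r)^{-2/n}e^{c'\sqrt\Lambda r}$. A ball-level inequality of this kind cannot be fed into the global vanishing argument: when the cutoff $\eta$ is supported in $B_p(2R)$ and $R\to\infty$, its constant blows up like $e^{c'\sqrt\Lambda R}$, so the absorption threshold degenerates and the single fixed integral in the hypothesis does not keep $(\text{Sobolev constant})\times\|Rm\|_{L^{n/2}(B_p(2R))}$ below the threshold uniformly in $R$. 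Your bookkeeping step, ``the product of the local Sobolev constant with $\int|Rm|^{n/2}dv_f$ is precisely'' the hypothesis integral, silently converts a scale-dependent constant into a pointwise weight inside the integral; that conversion is exactly the hard content. What the proof needs, and what the paper constructs, is a \emph{global} Euclidean-type inequality with dimensional constant in which the weight sits inside the left-hand integral,
\[ \left(\int_M |\phi|^\frac{2n}{n-2} \left(\frac{V_f(r)}{r^n}\right)^\frac{2}{n-2} e^{-\alpha r} dv_f\right)^\frac{n-2}{n} \leq C_S \int_M |\nabla \phi|^2 dv_f,
\]
obtained by gluing Neumann--Sobolev inequalities on connected components of dyadic annuli via a partition of unity (the Grigor'yan--Saloff-Coste scheme, with the dependence on the non-uniform doubling constant tracked explicitly). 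That gluing unavoidably produces an extra $\int_M\phi^2\,dv_f$ term on the right-hand side.

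Removing that $L^2$ term is the second missing ingredient: Munteanu--Wang's theorem that a nontrivial gradient steady soliton has bottom $f$-spectrum exactly $\Lambda/4>0$, i.e. $\tfrac\Lambda4\int_M\phi^2dv_f\le\int_M|\nabla\phi|^2dv_f$ for all $\phi\in C^1_0(M)$. Your proposal never invokes any spectral gap; indeed you call $\Lambda>0$ ``just a normalization,'' which misreads its role --- $\Lambda>0$ is equivalent to nonconstancy of $f$, and it enters the proof precisely and only through this spectral gap. Without it, the glued Sobolev inequality retains the $\phi^2$ term, the nonlinearity cannot be fully absorbed, and the argument collapses; this is exactly why the paper remarks that the method fails for Ricci-flat manifolds, whose bottom spectrum is zero. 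The rest of your sketch (taking $u=|Rm|^{n/4}$ so the cutoff error is $\int_{A(R,2R)}|Rm|^{n/2}dv_f$, which tends to $0$ by finiteness of the hypothesis integral together with the comparison $V_f(1)^{-1}\le r^nV_f(r)^{-1}e^{c_0r}$ on the annulus, and the final splitting conclusion for $f$) does go through as in the paper once the correct global inequality is in hand.
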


Recall the cigar soliton discovered by Hamilton (\cite{Hamilton1986}):
\[\left(\mathbb{R}^2, ds^2=\frac{dx^2+dy^2}{1+x^2+y^2}, f=-\log(1+x^2+y^2)\right).\]
Its product with any compact flat manifold is a non-flat gradient steady Ricci soliton with exponential curvature decay, therefore the $L^\frac{n}{2}$ integral of its sectional curvature, weighted as in Theorem \ref{gapsoliton} with probably smaller $c(n)$, is finite.

In the following, the term Sobolev inequality will be used to refer to Sobolev inequalities possibly with an $L^2$ term on the right hand side, i.e. inequalities of the form:
\[\left(\int \phi^\frac{2n}{n-2}\right)^\frac{n-2}{n}\leq C \left(\int |\nabla \phi|^2+\int\phi^2\right).\]
And the term Euclidean type Sobolev inequality refers to inequalities of the form:
\[\left(\int \phi^\frac{2n}{n-2}\right)^\frac{n-2}{n}\leq C \int |\nabla \phi|^2.\]
We will prove Theorem \ref{gapsoliton} by a weighted Sobolev inequality of Euclidean type. Recall that Riemannian manifolds with nonnegative Ricci curvature has a uniform volume doubling constant, and the validity of Euclidean type Sobolev ineqaulities is equivalent to maximal volume growth. If volume growth is only super-quadratic, Minerbe showed there are weighted Euclidean type Sobolev inequaltities (\cite{Minerbe2009}), the proof of which also depends sensitively on the volume growth assumption and the uniform volume doubling constant. However, in the case of gradient steady Ricci solitons, there does not exist a uniform volume doubling constant for geodesic balls of all sizes. On the other hand, Munteanu and Wang showed in~\cite{Munteanu-Wang2011} that nontrivial gradient steady Ricci solitons have positive $f$-spectrum. We can apply this result to improve a Sobolev inequality with $L^2$ term into Euclidean type, provided we use the weighted measure $e^{-f}dv$. Therefore we only need to establish a weighted Sobolev inequality with $L^2$ term, which will be proved in a more general setting.

Also, as shown in \cite{Minerbe2009}, (weighted) Poincare inequalities directly imply curvature gap theorems. So Munteanu and Wang's result can be used to prove the following:
\begin{theorem} \label{gap by poincare}
There exists a constant $C(n)$, such that for any complete noncompact gradient steady Ricci soliton $(M,g,f)$ with dimension $n \geq 3$, and $R+|\nabla f|^2=\Lambda >0$, if for some $p\in M$ and $\alpha \geq 1$,
\[\int_{B_p(2R)\backslash B_p(R)}  |Rm|^{2\alpha} dv_f = o(R^2) \text{ as $R\to \infty$},\]
and
\[\sup_M|Rm|< \frac{\Lambda}{4\alpha C(n)},\]
then $(M,g)$ is flat, and the pullback of $f$ to the universal cover is a linear function.
\end{theorem}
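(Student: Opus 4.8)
The plan is to prove Theorem~\ref{gap by poincare} by combining a Bochner-type differential inequality for the Riemann curvature tensor on gradient steady Ricci solitons with the positivity of the $f$-spectrum established by Munteanu and Wang. The key structural fact is that on a gradient steady Ricci soliton the curvature tensor satisfies an elliptic equation under the $f$-Laplacian $\Delta_f = \Delta - \nabla f \cdot \nabla$. Specifically, I expect a Bochner formula of the schematic form
\[
\Delta_f |Rm|^2 \geq 2|\nabla Rm|^2 - C(n)|Rm|^3,
\]
which after the standard Kato-type manipulation yields, away from the zero set of $|Rm|$, an inequality of the form $|Rm|\,\Delta_f |Rm| \geq -C(n)|Rm|^3$, and hence $\Delta_f |Rm| \geq -C(n)|Rm|^2$ in the barrier/distributional sense. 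Here the constant $C(n)$ is exactly the $C(n)$ appearing in the pinching hypothesis $\sup_M |Rm| < \Lambda/(4\alpha C(n))$.

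Next I would invoke the positive $f$-spectrum result: for a nontrivial gradient steady Ricci soliton with $R+|\nabla f|^2 = \Lambda$, Munteanu and Wang show that the bottom of the spectrum of $-\Delta_f$ is bounded below by $\Lambda/4$, which is equivalent to the weighted Poincar\'e inequality
\[
\frac{\Lambda}{4}\int_M \phi^2\, dv_f \leq \int_M |\nabla \phi|^2\, dv_f
\]
for all $\phi$ compactly supported. Following the Minerbe strategy, the aim is to test this Poincar\'e inequality against $\phi = |Rm|^\alpha \eta$, where $\eta$ is a cutoff function supported on $B_p(2R)$ and equal to $1$ on $B_p(R)$, with $|\nabla \eta| \leq C/R$. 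Integrating the inequality $\Delta_f |Rm| \geq -C(n)|Rm|^2$ against $|Rm|^{2\alpha-1}\eta^2$, integrating by parts with respect to $dv_f$, and absorbing terms, I would arrive at an estimate in which the pinching $\sup_M|Rm|< \Lambda/(4\alpha C(n))$ ensures that the destabilizing curvature term is strictly dominated by the spectral gap term $\tfrac{\Lambda}{4}\int (|Rm|^\alpha \eta)^2$. The factor $\alpha$ enters through the chain rule $|\nabla|Rm|^\alpha|^2 = \alpha^2 |Rm|^{2\alpha-2}|\nabla|Rm||^2$, which is why the threshold scales like $1/\alpha$.

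The result of these manipulations is a coercive estimate of the shape
\[
\left(\frac{\Lambda}{4} - \alpha C(n)\sup_M|Rm|\right)\int_M |Rm|^{2\alpha}\eta^2\, dv_f
\leq \frac{C}{R^2}\int_{B_p(2R)\setminus B_p(R)} |Rm|^{2\alpha}\, dv_f,
\]
where the left coefficient is strictly positive by hypothesis. The growth assumption $\int_{B_p(2R)\setminus B_p(R)}|Rm|^{2\alpha}\,dv_f = o(R^2)$ is precisely what makes the right-hand side tend to zero as $R\to\infty$, forcing $\int_M |Rm|^{2\alpha}\,dv_f = 0$ and hence $Rm \equiv 0$. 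Once the metric is flat, the soliton equation reduces to $\mathrm{Hess}(f) = 0$, so $\nabla f$ is parallel; lifting to the universal cover, which is isometric to $\R^n$, a function with parallel gradient is affine, establishing that the pullback of $f$ is linear. The main obstacle I anticipate is the careful bookkeeping in the integration-by-parts step: one must handle the cutoff cross-terms and the singular set $\{|Rm|=0\}$ correctly so that the Kato inequality and the weighted Green's identity apply rigorously, and one must verify that the constant $C(n)$ from the Bochner inequality matches the constant in the pinching threshold so that the coefficient on the left is genuinely positive. Controlling the boundary and cutoff contributions without any uniform volume doubling — relying only on the $o(R^2)$ growth — is where the delicate estimate lies.
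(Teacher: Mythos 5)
Your proposal is correct and follows essentially the same route as the paper's proof: testing the Munteanu--Wang weighted Poincar\'e inequality (bottom $f$-spectrum $\Lambda/4$) with $\phi = |Rm|^\alpha\eta$, using the elliptic inequality $-\Delta_f |Rm|^\alpha \leq \alpha C(n)|Rm|^{\alpha+1}$ after integration by parts, absorbing the curvature term via the pinching hypothesis, and letting the $o(R^2)$ annulus bound kill the cutoff term. The final step on the universal cover (parallel $\nabla f$ with $|\nabla f|^2 = \Lambda$ forcing $f$ to pull back to a linear function) also matches the paper.
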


We would like to remark that since $|\nabla f|$ is bounded, the proof in \cite{Carron2010} with little modification shows that gradient steady Ricci solitons are $(\Gamma, k)$ regular in the definition of Carron (see section \ref{Ricci-flat manifolds}). Therefore, some results in \cite{Carron2010} which only depend on the local regularity property still hold for gradient steady Ricci solitons, these results imply that the curvature cannot decay too fast locally.

Our method for nontrivial gradient steady Ricci solitons doesn't apply to Ricci-flat manifolds since the later necessarily have zero bottom spectrum. However, we can apply local Dirichlet type Sobolev and Poincare inequalities to prove some rigidity results, which roughly tell that the curvature cannot decay too fast in integral sense relative to its $L^\infty$ or $L^\frac{n}{2}$ norm. Moreover, we can combine local Sobolev inequalities and the regularity of Ricci-flat metrics to prove the following:

\begin{theorem}\label{gap theorem for ricci-flat manifolds}
There exists an $\epsilon$ depending only on $n$, such that for any complete noncompact Ricci-flat Riemannian manifold $(M,g)$ of dimension $n$, if there exists a point $p\in M$ such that
\[\int_M |Rm|^\frac{n}{2} \rho_p^{-1}(r) < \epsilon,\]
where
\[ \rho_p(r)=\frac{V(B_p(r))}{r^n},\]
then $Rm\equiv 0$.
\end{theorem}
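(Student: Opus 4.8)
The plan is to combine the elliptic inequality satisfied by the curvature of a Ricci-flat metric with a scale-invariant $\epsilon$-regularity estimate, in the spirit of Carron; the point of the weight $\rho_p^{-1}(r)$ is that it is exactly tailored to absorb the scale dependence of the local Sobolev constant. First I would record the Bochner-type inequality: for a Ricci-flat metric the full curvature tensor satisfies $\nabla^*\nabla Rm = Rm*Rm$, so $u:=|Rm|$ obeys, weakly on $\{u>0\}$ and hence distributionally on $M$, $\Delta u\ge -c(n)u^2$ (a refined Kato inequality may be used but is not essential). Since $Ric\equiv 0\ge 0$, Bishop--Gromov gives volume doubling and the local Dirichlet--Sobolev inequality on every ball, namely $(\int |\phi|^{2n/(n-2)})^{(n-2)/n}\le C(n)r^2 V(B_x(r))^{-2/n}\int|\nabla\phi|^2$ for $\phi\in C_c^\infty(B_x(r))$; moreover the densities $V(B_x(s))/s^n$ are monotone in $s$ and comparable across centers as long as the radii are small relative to the distance between the centers.

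Next I would run Moser iteration on $\Delta u\ge -c(n)u^2$ with this Sobolev inequality to obtain a scale-invariant regularity statement: there are $\epsilon_0(n),C(n)$ so that if the volume-normalized energy $\frac{r^n}{V(B_x(r))}\int_{B_x(r)}u^{n/2}\le\epsilon_0$, then $r^2\sup_{B_x(r/2)}u\le C(n)\big(\frac{r^n}{V(B_x(r))}\int_{B_x(r)}u^{n/2}\big)^{2/n}$. The role of the weight is now transparent: since $\rho_p^{-1}(r(y))=r(y)^n/V(B_p(r(y)))$, Bishop--Gromov comparison shows that for every $x$ and every $r\le\tfrac12 d(p,x)$ one has $\frac{r^n}{V(B_x(r))}\le C(n)\,\rho_p^{-1}(r(y))$ for all $y\in B_x(r)$, so $\frac{r^n}{V(B_x(r))}\int_{B_x(r)}u^{n/2}\le C(n)\int_{B_x(r)}u^{n/2}\rho_p^{-1}<C(n)\epsilon$. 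For $\epsilon$ small this is below $\epsilon_0$, the regularity estimate applies at scale $r=\tfrac12 d(p,x)$, and letting $d(p,x)\to\infty$ yields $u(x)\le C(n)\epsilon^{2/n}d(p,x)^{-2}\to 0$; in particular $|Rm|\to 0$ at infinity.

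Finally, to upgrade decay to vanishing I would argue by contradiction. As $u$ is continuous and decays at infinity, if $u\not\equiv 0$ then $a:=\sup_M u$ is positive and attained at some $x_0$. Setting $r_0:=\sqrt{\sigma}\,a^{-1/2}$ for a fixed dimensional $\sigma>C(n)\epsilon_0^{2/n}$, the identity $r_0^2\sup_{B_{x_0}(r_0/2)}u=\sigma$ forces the normalized energy on $B_{x_0}(r_0)$ to exceed $\epsilon_0$ (otherwise the regularity estimate would bound $\sigma$ by $C(n)\epsilon_0^{2/n}<\sigma$), i.e. $\int_{B_{x_0}(r_0)}u^{n/2}>\epsilon_0\,V(B_{x_0}(r_0))/r_0^n$. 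When $x_0$ lies far from $p$, precisely when $d(p,x_0)\ge 2r_0$, the same Bishop--Gromov comparison gives $\rho_p^{-1}(r(\cdot))\ge c(n)\,r_0^n/V(B_{x_0}(r_0))$ on the ball, so the weighted energy there is at least $c(n)\epsilon_0$; choosing $\epsilon<c(n)\epsilon_0$ contradicts the hypothesis and forces $Rm\equiv 0$.

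The main obstacle is the remaining case in the last step, when the curvature maximum concentrates near the base point, $d(p,x_0)<2r_0$, at a scale $r_0$ where $B_{x_0}(r_0)$ is collapsed. There $\rho_p^{-1}$ sits near its minimal Euclidean value $\omega_n^{-1}$ and the volume comparison that turns normalized energy into the weighted integral degenerates, so the crude weighted lower bound need not stay bounded below; I expect this to be the crux. The natural device is to exploit that $\rho_p(s)\to\omega_n$ as $s\to0$ (every smooth manifold is infinitesimally Euclidean), which pins a definite noncollapsing scale $\tau>0$ at $p$, and then to split according to whether $r_0\lesssim\tau$ (so $B_{x_0}(r_0)$ is noncollapsed and the bound $\rho_p^{-1}\ge\omega_n^{-1}$ already suffices) or $r_0\gtrsim\tau$ (so $a\tau^2\lesssim\sigma$ and the regularity estimate on the noncollapsed ball $B_p(\tau)$ controls the curvature near $p$). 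Balancing the constants across these two regimes, or replacing the maximum-point selection by a point of maximal \emph{normalized} curvature that is automatically noncollapsed, is where I expect the real work of the argument to lie.
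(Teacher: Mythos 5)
Your first two steps are sound: the Moser-iteration $\epsilon$-regularity with \emph{volume-normalized} energy $\frac{r^n}{V(B_x(r))}\int_{B_x(r)}|Rm|^{n/2}$ is exactly what the Saloff-Coste local Sobolev inequality (4.2) delivers, and your Bishop--Gromov comparison correctly converts the weighted hypothesis into normalized smallness on balls $B_x(r)$ with $r=\tfrac12 d(p,x)$, giving the decay $|Rm|(x)\le C(n)\epsilon^{2/n}d(p,x)^{-2}$. The gap is in the final step, and it is worse than you indicate: your own decay estimate makes the ``far'' case vacuous. Indeed, at the maximum point $x_0$ you have $a\,d(p,x_0)^2\le C(n)\epsilon^{2/n}$, while $r_0^2=\sigma/a$ with $\sigma$ a fixed dimensional constant; hence $d(p,x_0)\le r_0\sqrt{C(n)\epsilon^{2/n}/\sigma}<2r_0$ once $\epsilon$ is small. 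So the case you can handle never occurs, and the ``near $p$, collapsed'' case you flag as the crux \emph{is} the entire theorem. Moreover, your proposed remedies cannot close it under the constraint $\epsilon=\epsilon(n)$: the noncollapsing scale $\tau$ with $\rho_p(s)\approx\omega_n$ for $s\le\tau$ is manifold-dependent, so any quantitative balancing against $\tau$ forces $\epsilon$ to depend on the manifold; and in genuinely collapsed Ricci-flat manifolds there is no point of ``maximal normalized curvature that is automatically noncollapsed'' to pick. The structural obstruction is that near $p$ the weight $\rho_p^{-1}\ge\omega_n^{-1}$ only gives \emph{unnormalized} smallness $\int u^{n/2}\lesssim\epsilon$, whereas your regularity estimate needs smallness relative to $V(B_{x_0}(r_0))/r_0^n$, which in the contradiction scenario is itself $\lesssim\epsilon$; the two never meet.

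The paper avoids this mismatch by never converting integral smallness into pointwise information in the collapsed region. It point-picks starting from $p$ at the curvature scale $r=\sup\{l:\sup_{B_p(l)}|Rm|\le (Ll^2)^{-1}\}$, and uses Carron's $(\Gamma,1)$-regularity (a gradient bound on $Rm$, Lemma \ref{regularity of einstein metric}) to turn the single value $|Rm|(\bar x)=(4L\bar r^2)^{-1}$ into a pointwise lower bound $|Rm|\ge(8L\bar r^2)^{-1}$ on a definite ball $B_{\bar x}(\bar r/(8L\Gamma))$. Bishop--Gromov then gives $\int_{B_p(2r)}|Rm|^{n/2}\ge c(n,L)\,\rho_p(2r)$, which combined with the hypothesis shows both that the $L^{n/2}$ mass of curvature concentrates in $B_p(2r)$ and that its total is at most $C(L,\epsilon)\rho_p(4r)$. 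These are exactly the hypotheses of the purely integral vanishing theorem (Theorem \ref{gap thm by local sobolev}, proved by cut-off functions, the local Sobolev inequality and the refined Kato inequality), whose smallness threshold is itself proportional to the volume ratio $V(B_p(2R_0))/(2R_0)^n$. Because the lower bound, the upper bound, and the threshold all scale with $\rho_p$, collapsing cancels on both sides and no noncollapsing hypothesis is ever needed. If you want to salvage your argument, you should replace the maximum-point contradiction by this kind of normalized integral concentration argument; as written, the proposal does not prove the theorem.
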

This result has been proved in \cite{Minerbe2009} with an additional assumption that the volume growth is super-quadratic, which is removed here. One major motivation for considering Ricci-flat manifolds with small volume growth is the existence of a large class of asymptotically cylindrical Ricci-flat manifolds, which has been studied in \cite{Tian-yau1990,Tian-Yau1991,Kovalev2003,Haskins-Hein-Nordstrom2012,Corti-Alessio-Haskins-Nordstrom-Pacini2013}. These manifolds have exactly linear volume growth.
%
Examples of non-flat ACyl Ricci-flat manifolds asymptotic to flat metrics at infinity are given by \cite{Haskins-Hein-Nordstrom2012}. Such manifolds have their sectional curvature integrable in $L^\frac{n}{2}$ norm weighted by $\rho_p^{-1}$ as in Theorem \ref{gap theorem for ricci-flat manifolds}. Moreover, their products with scaled circles $\epsilon\mathbb{S}^1$ have arbitrarily small $L^\frac{n}{2}$ integral of curvature as $\epsilon \to 0$, justifying the volume ratio term in Theorem \ref{gap theorem for ricci-flat manifolds}.

The organization of this paper is as the following: In the relatively independent section \ref{weightedSI} we prove weighted Sobolev inequalities on smooth metric measure spaces which are more general than gradient steady Ricci solitons, and may be of independent interests. We prove Theorem \ref{gapsoliton} in section \ref{steady ricci soliton}. Ricci-flat manifolds will be discussed in section \ref{Ricci-flat manifolds}.

\textbf{Acknowledgements:} This work was done when the author was studying at the University of California, Irvine, and the results have been included in the author's thesis \cite{he2014regularity}. The author is deeply grateful to his advisor professor Peter Li. He would also like to thank professor Jeffrey Streets for helpful discussions, and professor Chuu-Lian Terng for her generous support.

   \section{Weighted Sobolev inequalities}
   \label{weightedSI}
In this section, we let $(M,g,d\lambda)$ be a weighted manifold with dimension $n\geq 3$, where $d\lambda=wdv$ for some positive smooth function $w$ on $M$. We can write the volume form in polar coordinates as
\[ dv(exp_x(r,\theta))=J(x,r,\theta)drd\theta,
\]
and let $J_\lambda(x,r,\theta)=wJ(x,r,\theta)$. We say that $(M,g,d\lambda)$ satisfies the \textit{exponential Jacobian comparison property} with respect to $p\in M$ if there is a constant $c_0$, such that
\begin{equation}\label{jacobian comparison}
\frac{J_\lambda(x,r_2,\theta)}{J_\lambda(x,r_1,\theta)}\leq e^{c_0 R}\left(\frac{r_2}{r_1}\right)^{n-1},
\end{equation}
for any $R>0$, $x\in B_p(R)$ and $0<r_1<r_2\leq R$.

Condition (\ref{jacobian comparison}) immediately implies the \textit{exponential volume comparison property} that
\begin{equation}\label{volume comparison}
\frac{V_\lambda(x,r_2)}{V_\lambda(x,r_1)}\leq e^{c_0 R}\left(\frac{r_2}{r_1}\right)^{n},
\end{equation}
for any $R>0$, $x\in B_p(R)$ and $0<r_1<r_2\leq R$, where
\[ V_\lambda(x,r)=\int_{B_x(r)}d\lambda .
\]
Typical examples satisfying (\ref{jacobian comparison}) are smooth metric measure spaces $(M,g,e^{-f}dv)$ with linear potential function and nonnegative $\infty$-Bakry-Emery Ricci tensor
\[ Ric(g)+Hess(f)\geq 0,
\]
which include gradient steady Ricci solitons as special cases.

The goal of this chapter is to prove a weighted Sobolev inequality for smooth metric measure spaces satisfying (\ref{jacobian comparison}), see Theorem \ref{thm: weighted sobolev inequality for metric measure space}. For gradient steady Ricci solitons, the potential function has bounded gradient, hence the volume comparison constant is uniform for all unit geodesic balls (see \cite{Wei-Wylie2009}). Thus we can directly apply well-known results to obtain $L^2$-Sobolev inequalities for each unit ball, then glue them by a partition of unity. This is actually sufficient for our application in this work.

However, in general the volume comparison constant may have exponential decay even for small regions. Since it may be of independent interests, we prove Theorem \ref{thm: weighted sobolev inequality for metric measure space} in the general case. And given that Euclidean type Sobolev inequalities are more useful, the proof is presented in a manner that we keep the local Sobolev inequalities Euclidean type as long as we can, and the $L^2$-term is only introduced in the last step when we glue them by a partition of unity.

  We can use (1) and the method of Buser (\cite{Buser1982}) to prove Neumann Poincare inequalities on geodesic balls:
\begin{lemma}\label{NPI on balls}
There exist $C_1$ and $C_2$ depending only on $n$ and $c_0$, such that for any $R>0$, $x\in B_p(R)$, $0<r \leq R$ and $\phi \in C^1(B_x(r))$, we have
\[ \int_{B_x(r)}|\phi-\phi_{B_x(r)}|^2 d\lambda \leq C_1 e^{C_2 R} r^2 \int_{B_x(r)}|\nabla \phi|^2 d\lambda,
\]
where $\phi_{B_x(r)}=V_\lambda(x,r)^{-1}\int_{B_x(r)}\phi d\lambda$.
\end{lemma}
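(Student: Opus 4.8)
The plan is to follow Buser's approach to local Poincaré inequalities, using the Jacobian comparison (\ref{jacobian comparison}) and its volume consequence (\ref{volume comparison}) in place of the Ricci lower bounds that appear classically. First I would reduce the assertion to a pairwise oscillation bound. Writing $B=B_x(r)$, expanding the squares gives the variance identity
\[\int_{B}|\phi-\phi_{B}|^2\,d\lambda=\frac{1}{2V_\lambda(x,r)}\int_{B}\int_{B}|\phi(y)-\phi(z)|^2\,d\lambda(y)\,d\lambda(z).\]
For a fixed pair $(y,z)$ I would join them by a minimizing geodesic $\gamma$ of length $L=d(y,z)\le 2r$ and estimate, by the fundamental theorem of calculus and Cauchy--Schwarz, $|\phi(y)-\phi(z)|^2\le 2r\int_\gamma|\nabla\phi|^2$. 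Since every point of $\gamma$ is within distance $L/2\le r$ of whichever of $y,z$ is nearer, and that endpoint is within $r$ of $x$, we have $\gamma\subset B_x(2r)$. Thus the problem is reduced to controlling the double integral of the geodesic line integral of $|\nabla\phi|^2$.

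The heart of the matter is a weighted segment inequality of the form
\[\int_{B}\int_{B}\Big(\int_{\gamma_{yz}}|\nabla\phi|^2\Big)\,d\lambda(y)\,d\lambda(z)\le C(n)\,e^{C(n,c_0)R}\,r\,V_\lambda(x,r)\int_{B_x(2r)}|\nabla\phi|^2\,d\lambda,\]
which, combined with the reduction above, yields the Poincaré inequality with $B_x(2r)$ in place of $B_x(r)$ on the right. To establish it I would fix one endpoint, pass to polar coordinates centered there, rewrite the line integral as a radial integral, and interchange the order of the radial integrations. The inner Jacobian integral $\int_s^{L}J_\lambda(\cdot,\ell,\theta)\,d\ell$ is then estimated directly by (\ref{jacobian comparison}); reverting from $(s,\theta)$ to the measure $d\lambda$ and integrating in the second endpoint leaves a Riesz-type kernel $\int_B d(\cdot,q)^{-(n-1)}\,d\lambda$ to bound.

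The step I expect to be the main obstacle is exactly this measure bookkeeping, because (\ref{jacobian comparison}) is a purely one-sided (measure-contraction) comparison: it bounds the growth of $J_\lambda$ along a ray but gives no lower bound, so it cannot be inverted. A naive radial argument from a single center over-weights the pole and produces a spurious extra power of the weight $w$; the contraction must instead be distributed over a positive-measure set of base points and controlled solely through the upper Jacobian bound and the doubling property (\ref{volume comparison}), which is what keeps the correct weighted measure $d\lambda$ on the right-hand side. The second delicate point is removing the dilation, that is, replacing $B_x(2r)$ by $B_x(r)$: since geodesics joining points of $B_x(r)$ need not remain in $B_x(r)$, I would appeal to the self-improvement of weak Poincaré inequalities on geodesic spaces, using that the manifold is a length space and that (\ref{volume comparison}) furnishes doubling at scale $R$ with constant $e^{c_0R}$, via a chaining argument whose exponential volume ratios are absorbed into the final constants $C_1(n,c_0)$ and $C_2(n,c_0)$.
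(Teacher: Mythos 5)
Your proposal is correct and is essentially the approach the paper intends: the paper omits the proof of this lemma, referring to Buser's method based on (\ref{jacobian comparison}) and to Munteanu--Wang for the detailed weighted treatment, and that is exactly what you outline --- a weak inequality (with $B_x(2r)$ on the right) from the variance identity plus the Jacobian comparison applied along geodesic segments, where the one-sided comparison is only ever invoked between radii differing by a bounded factor (your ``distribution over base points,'' i.e.\ splitting each segment and comparing from the far endpoint, which indeed avoids the spurious weight factor at the pole that a Riesz-kernel reduction would produce), followed by removal of the dilation via chaining under the doubling property (\ref{volume comparison}). The one point worth making explicit is the bookkeeping you allude to at the end: the chaining constants must depend on the doubling constant $2^n e^{c_0 R}$ only through a power depending on $n$ alone, so that the final constant retains the form $C_1 e^{C_2 R} r^2$; this is the same care the paper itself exercises in the proof of Lemma \ref{NSI on balls}.
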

The proof is omitted here, one can refer to \cite{Munteanu-Wang2011} for a detailed proof in the smooth metric measure space setting.
Using (\ref{volume comparison}) and Lemma \ref{NPI on balls}, we can apply the method of Maheux-Sallof-Coste (\cite{Maheux-Saloff-Coste1995}) to prove Neumann Sobolev inequalities on balls:

\begin{lemma}\label{NSI on balls}
There exist $C_1$ and $C_2$ depending only on $n$ and $c_0$, such that for any $R>0$, $x\in B_p(R)$, $0<r \leq R$ and $\phi \in C^1(B_x(r))$, we have
\[ \left( \int_{B_x(r)}|\phi-\phi_{B_x(r)}|^{\frac{2n}{n-2}} d\lambda \right)^{\frac{n-2}{n}} \leq C_1 e^{C_2 R} \frac{r^2}{V_\lambda(x,r)^{2/n}} \int_{B_x(r)}|\nabla \phi|^2 d\lambda,
\]
where $\phi_{B_x(r)}=V_\lambda(x,r)^{-1}\int_{B_x(r)}\phi d\lambda$.
\end{lemma}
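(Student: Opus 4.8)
The plan is to obtain Lemma~\ref{NSI on balls} as a consequence of the Poincar\'e inequality of Lemma~\ref{NPI on balls} together with the volume comparison~(\ref{volume comparison}), by the scheme of Maheux and Saloff-Coste: on a space carrying volume doubling and a scale-invariant $L^2$ Poincar\'e inequality one always has the matching $L^2$ Sobolev inequality of homogeneous dimension $n$. The one extra point to monitor is that every exponential factor produced stays of the form $e^{C_2R}$. This holds because each auxiliary ball entering the argument is centered at a point $y\in B_x(r)$, so $d(p,y)<2R$, and has radius at most $r\le R$; hence~(\ref{jacobian comparison}), (\ref{volume comparison}) and Lemma~\ref{NPI on balls} all apply with $R$ replaced by $2R$, giving constants bounded by $e^{CR}$ uniformly in the center and the scale.

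First I would extract from~(\ref{volume comparison}) the two volume estimates the argument uses. Doubling: for $y\in B_x(r)$ and $0<s\le r$ one has $V_\lambda(y,2s)\le e^{CR}2^n V_\lambda(y,s)$. Reverse doubling with the sharp power: since $d(x,y)<r$ gives $B_x(r)\subset B_y(2r)$, applying~(\ref{volume comparison}) at the center $y$ with radii $s\le 2r$ yields
\[ V_\lambda(y,s)\ \ge\ c(n)\,e^{-CR}\Big(\tfrac{s}{r}\Big)^{n}V_\lambda(x,r),\qquad 0<s\le r. \]
The power $n$ here is exactly what produces the normalization $V_\lambda(x,r)^{2/n}$ in the conclusion.

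The core of the proof is a chaining estimate. Fix $\phi\in C^1(B_x(r))$ and write $B=B_x(r)$. For almost every $y\in B$ and $r_k=2^{-k}r$, telescoping the averages of $\phi$ over the nested balls $B_y(r_k)$ and applying Lemma~\ref{NPI on balls} together with doubling on each of them represents $\phi(y)-\phi_{B}$ pointwise by a Riesz-potential type (fractional integral) operator applied to $|\nabla\phi|$ at scales $\le r$. Inserting the reverse-doubling lower bound of power $n$ makes this operator bounded from $L^2$ to $L^{2n/(n-2)}$ on $(B,d\lambda)$, with operator norm comparable to $e^{CR}\,r\,V_\lambda(x,r)^{-2/n}$ in the appropriate scaling; boundedness precisely at the sharp exponent $2n/(n-2)$ is then secured by the truncation method, i.e. by applying the resulting weak-type estimate to the dyadic truncations of $\phi-\phi_{B}$ and summing the ensuing geometric series. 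This delivers the inequality of Lemma~\ref{NSI on balls} with a constant of the stated shape $C_1e^{C_2R}$.

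The main obstacle I anticipate is technical rather than conceptual and is twofold. Because we are proving a Neumann inequality for $\phi-\phi_B$ on a fixed ball rather than for a compactly supported function, the shrinking balls $B_y(r_k)$ and any enlarged balls used in the chaining must be kept inside $B$; this is handled by running the telescoping along a Whitney-type covering of $B$, so that every average is taken over a subball of $B$ and each point is joined to the global mean $\phi_B$ through a chain of comparable interior balls. The second point is to verify that the passage from the weak-type inequality to the strong sharp-exponent inequality via truncation neither degrades the constant beyond $C_1e^{C_2R}$ nor alters the volume power $V_\lambda(x,r)^{2/n}$; since every truncation level reuses the same uniform doubling and Poincar\'e constants, the geometric summation only affects the dimensional constant $C_1$, leaving the exponential factor and the volume normalization intact.
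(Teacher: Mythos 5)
Your proposal is correct in outline, but it implements the ``doubling $+$ Poincar\'e $\Rightarrow$ Sobolev'' principle by a genuinely different route than the paper. The paper follows Maheux--Saloff-Coste literally: a Whitney cover of $B_p(R)$, Jerison-type chains of balls, and the self-improvement machinery of Bakry--Coulhon--Ledoux--Saloff-Coste; essentially all of its effort goes into re-proving the chain-counting estimate (Lemma 4.3 of \cite{Maheux-Saloff-Coste1995}) so that the dependence on the doubling constant $D=2^ne^{c_0R}$ is explicitly polynomial, $c_1(n)D^{c_2(n)}$ --- delicate there because the exponent $\epsilon=\log_4(1+C(n)D^{-1})$ degenerates as $D\to\infty$ and the factors $\max\{\epsilon^{-1},\epsilon^{-p}\}$ must be shown to cost only a power of $D$. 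You instead take the Haj\l{}asz--Koskela path: telescoping to a pointwise maximal-function/potential estimate, a weak-type inequality, then Maz'ya truncation at the sharp exponent. This is a valid alternative, and it buys something real: the exponent $\tfrac{2n}{n-2}$ comes from your reverse-doubling bound of fixed power $n$ (only its multiplicative constant is $e^{CR}$), so no degenerating exponent ever appears, and the constants visibly compose polynomially through the chaining, the Vitali-covering weak $(1,1)$ maximal inequality (cost $D^{O(1)}$), the optimization in the splitting scale, and the truncation summation (dimensional factors only).

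Two points need care if you write this up. First, your opening claim that doubling plus a scale-invariant $L^2$ Poincar\'e inequality ``always'' yields the matching Sobolev inequality \emph{of homogeneous dimension $n$} is not correct as stated: the homogeneous dimension attached to $D$ here is $n+c_0R\log_2 e$, which grows with $R$, and doubling alone would give an exponent and normalization that deteriorate with $R$; it is precisely the power-$n$ reverse doubling --- which you do derive correctly from (\ref{volume comparison}) --- that must be fed into the machinery, so that hypothesis, not doubling, should carry the argument. Second, since the input Lemma~\ref{NPI on balls} is an $L^2$ (not $L^1$) Poincar\'e inequality, the chaining does not produce a genuine Riesz potential of $|\nabla\phi|$ but sums of the form $\sum_k r_k\bigl(V_\lambda(B_k)^{-1}\int_{B_k}|\nabla\phi|^2\,d\lambda\bigr)^{1/2}$, which must be handled by the maximal-function splitting before the weak-type and truncation steps; your ``Riesz-potential type'' phrasing covers this, but the verification that each of these steps costs only a power of $D$ --- the exact analogue of what the paper's computation establishes for the Whitney-cover route --- is asserted rather than proved in your sketch and is the one thing that must be written out in full for the lemma's stated constant $C_1e^{C_2R}$ to be justified.
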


\begin{proof}
The proof is the same as in \cite{Maheux-Saloff-Coste1995}. However, the Sobolev constants depend implicitly on the volume doubling constant in the original statement in \cite{Maheux-Saloff-Coste1995}, we need to make it explicit, so we have to compute the constants more carefully. We sketch it here, and refer to \cite{Jerison1986,Maheux-Saloff-Coste1995,Bakry-Coulhon-Ledoux-Saloff-Coste1995} for details.

Denote $D$ to be the volume doubling constant of $B_p(R)$, then $D=2^ne^{c_0R}$. Our goal is to write the Sobolev constant in the form of $c_1(n)D^{c_2(n)}$, where $c_1(n)$ and $c_2(n)$ are some constants depending on $n$. In \cite{Maheux-Saloff-Coste1995}, a Whitney cover $\mathcal{F}_s$ of $B_p(R)$ is constructed for each $0<s\leq R$. By the construction, $\mathcal{F}_s$ is a disjoint union of $\mathcal{F}_{0,s}$ and $\mathcal{F}_{1,s}$, for each $B\in \mathcal{F}_{0,s}$, it's centered in $B_p(R-0.999s)$ with radius $r(B)=10^{-3}s$; for each $B\in \mathcal{F}_{1,s}$, $r(B)=10^{-3}dist(B,\partial B_p(R))$. And the number of $10B$ covering a point in $B_p(R)$ cannot exceed $K=C(n)D$.

For each $B$ in $\mathcal{F}_{1,s}$, there are finitely many balls in $\mathcal{F}_s$ denoted by $A_i$, $i=0,1,...,l(B)$, such that
\[A_0=B; 2A_i \cap 2A_{i+1}\neq \emptyset; A_i\in \mathcal{F}_{1,s} \text{ for } i=0,1,...,l(B)-1; A_{l(B)} \in \mathcal{F}_{0,s};
\]
and $A_i\cap \gamma_B \neq \emptyset$, where $\gamma_B$ is a minimal geodesic joining $p$ and the center of $B$. For each $B\in \mathcal{F}_s$, define
\[ \mathcal{F}_s(B)=\{A_i| i=0,1,...,l(B)\};
\]
and for each $A \in \mathcal{F}_s$, define
\[ A(\mathcal{F}_s)=\{B\in \mathcal{F}_s| A \in \mathcal{F}_s(B)\}.
\]
It is estimated in \cite{Jerison1986} that
\[ \sharp \mathcal{F}_s(B) \leq C(n)D \log\left(\frac{s}{r(B)}\right),
\]
and
\[ \sum_{B\in A(\mathcal{F}_s), r\leq r(B) \leq 2r} V_\lambda(B)\leq C_1(n)D^{C_2(n)}\left(\frac{r}{r(A)}\right)^\epsilon V_\lambda(A),
\]
where we can take
\[\epsilon= \log_4 (1+C(n)D^{-1}).\]
We have to estimate the following quantity, take any $p>1$,
\[ \begin{split}
 &\sum_{B\in A(\mathcal{F}_s)} \sharp \mathcal{F}_s(B)^{p-1} V_\lambda(B)\\
  \leq & \sum_{i=1}^\infty \sum_{(\frac{1}{2})^i 10^2 r(A) \leq r(B) \leq (\frac{1}{2})^{i-1} 10^2 r(A)}\sharp \mathcal{F}_s(B)^{p-1} V_\lambda(B) \\
\leq &\sum_{i=1}^\infty \left(C(n)D \log \frac{s}{(1/2)^i10^2r(A)}\right)^{p-1} C_1(n)D^{C_2(n)} \left(\frac{(1/2)^i 10^2 r(A)}{r(A)}\right)^\epsilon V_\lambda(A) \\
\leq &\sum_{i=1}^\infty C(n,p) D^{C(n,p)} \left(\log{\frac{s}{r(A)}}-\log{10^2(\frac{1}{2})^i}\right)^{p-1} \left(10^2 (\frac{1}{2})^i\right)^\epsilon V_\lambda(A).
\end{split}
\]
Denote $l_i=10^2(\frac{1}{2})^i$.
\[ \begin{split}
 & \left(\log{\frac{s}{r(A)}}-\log{l_i}\right)^{p-1}l_i^\epsilon \\
 = &\left(\frac{p-1}{\epsilon}\right)^{p-1} \left(\frac{\epsilon}{p-1}l_i^{\frac{\epsilon}{p-1}}\log{\frac{s}{r(A)}}-l_i^{\frac{\epsilon}{p-1}}\log{l_i^{\frac{\epsilon}{p-1}}}\right)^{p-1} \\
 \leq & 2^{p-1}\left(\frac{p-1}{\epsilon}\right)^{p-1} \left(\left(\frac{\epsilon}{p-1}\right)^{p-1}l_i^{\epsilon}\left(\log{\frac{s}{r(A)}}\right)^{p-1}+2^{p-1}e^{1-p}l_i^{\epsilon/2}\right).\\
\end{split}
\]
Since
\[ \sum_{i=1}^\infty l_i^\epsilon = 10^2 \sum _{i=1}^\infty (\frac{1}{2^\epsilon})^i \leq 10^2 (\ln2) \epsilon^{-1},
\]
we get
\[ \sum_{B\in A(\mathcal{F}_s)} \sharp \mathcal{F}_s(B)^{p-1} V_\lambda(B) \leq C(n,p)D^{C(n,p)}  \max\{\epsilon^{-1}, \epsilon^{-p}\} \left(  \log{\frac{s}{r(A)}}\right)^{p-1}V_\lambda(A).
\]
Note that $\ln(1+x)> x^2$ when $0<x<1/4$, so we get from the above estimates:
\[  \sum_{B\in A(\mathcal{F}_s)} \sharp \mathcal{F}_s(B)^{p-1} V_\lambda(B) \leq C(n,p)D^{C(n,p)} \left(  \log{\frac{s}{r(A)}}\right)^{p-1}V_\lambda(A),
\]
which is Lemma 4.3 in \cite{Maheux-Saloff-Coste1995}. The dependency of constants is then clear in the rest of the proof, see \cite{Maheux-Saloff-Coste1995,Bakry-Coulhon-Ledoux-Saloff-Coste1995}.

Note we only need to take $p=2$ in this lemma.
\end{proof}

With this lemma, we can prove weak Dirichlet Sobolev inequalities on connected components of annuli by using the method in \cite{Grigor'yan-Saloff-Coste2005}.
In the following we let
\[E_{-\delta}:= \{x | d(x,M-E) > \delta (R_2-R_1) \},\]
and let $E_\delta$ be the $\delta-$neighbourhood of $E$.
\begin{lemma}\label{DSI on annuli}
There exist constants $C_1(n,\delta)$ and $C_2(n,c_0)$, for any connected component $E$ of annulus $A(R_1,R_2):= B_p(R_2)\backslash B_p(R_1)$, $R_2>R_1>0$, and any $\delta <<1$, we have
\[ \left( \int_{E_{-\delta}} |\phi|^{\frac{2n}{n-2}} d\lambda \right)^\frac{n -2}{n} \leq C_1e^{C_2 R_2} \frac{R_2^2}{V_\lambda(B_p(R_2))^{2/n}} \int_{E_\delta}|\nabla \phi|^2 d\lambda,
\]
for any $\phi \in C_0^1(E_{-\delta})$.
\end{lemma}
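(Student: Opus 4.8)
The plan is to follow the Whitney-covering and chaining method of Grigor'yan--Saloff-Coste~\cite{Grigor'yan-Saloff-Coste2005}, using Lemma~\ref{NSI on balls} as the local building block and the exponential volume comparison~(\ref{volume comparison}) as the doubling input, and keeping every Sobolev constant in the explicit shape $c_1(n)\,D^{c_2(n)}$ with $D=2^n e^{c_0 R_2}$, exactly as tracked in the proof of Lemma~\ref{NSI on balls}. First I would fix the component $E$ and an enlargement $E_\delta$ (say $E_\delta=A((1-\delta)R_1,(1+\delta)R_2)$ intersected with the component containing $E$), and build a Whitney-type cover of $E$ by two families: interior balls $\mathcal F_{0}$ of radius comparable to $\delta R_2$, and boundary balls $\mathcal F_{1}$ of radius proportional to the distance to $\partial A(R_1,R_2)$. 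I would arrange that the tenfold dilate of every cover ball lies in $E_\delta$, that $E\subset\bigcup B$, and that the number of dilates meeting any point is at most $C(n)D$, all of which follow from~(\ref{volume comparison}) as in the proof of Lemma~\ref{NSI on balls}.

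Next, on each cover ball $B\subset B_p(R_2)$ I would apply Lemma~\ref{NSI on balls}, whose constant is $C_1 e^{C_2 R_2}\,r(B)^2 V_\lambda(B)^{-2/n}$, and then run the telescoping chain: join each ball to a fixed anchor ball $B_*$ along a minimal geodesic through overlapping cover balls, and sum the local oscillations using the weighted counting estimate $\sum_{B\in A(\mathcal F_s)}(\sharp\mathcal F_s(B))\,V_\lambda(B)\le C(n)D^{C(n)}\log(s/r(A))\,V_\lambda(A)$ already proved for Lemma~\ref{NSI on balls} (only $p=2$ is needed). Two uses of~(\ref{volume comparison}) then collapse the scale factors: since $E$ stays at distance $\ge R_1$ from $p$ every radius is at most $\sim R_2$, and for $B=B_x(r(B))$ with $x\in B_p(R_2)$ one has $V_\lambda(B_p(R_2))\le V_\lambda(x,2R_2)\le 2^n e^{2c_0R_2}(R_2/r(B))^n V_\lambda(B)$, so $r(B)^2 V_\lambda(B)^{-2/n}\le C e^{CR_2}R_2^2\,V_\lambda(B_p(R_2))^{-2/n}$. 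This is what replaces all local volumes by the single full-ball volume $V_\lambda(B_p(R_2))$ and produces the clean coefficient $e^{C_2R_2}R_2^2 V_\lambda(B_p(R_2))^{-2/n}$ in front of a bound on $\big(\int_E|\phi-\phi_{B_*}|^{2n/(n-2)}d\lambda\big)^{(n-2)/n}$ by $\int_{E_\delta}|\nabla\phi|^2 d\lambda$.

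The decisive step --- and the one I expect to be the main obstacle --- is upgrading this oscillation bound to the genuine Dirichlet quantity $\int_E|\phi|^{2n/(n-2)}d\lambda$ of the statement, i.e. disposing of the anchor term $|\phi_{B_*}|\,V_\lambda(E)^{(n-2)/2n}$ that the triangle inequality leaves behind. The plan is to place $B_*$ in the boundary family $\mathcal F_{1}$, flush against $\partial A(R_1,R_2)$, and to exploit that these balls shrink proportionally to the distance to the boundary with dilates reaching into the collar $E_\delta\setminus E$; a radial co-area/Poincar\'e estimate across this collar is what should control $\phi_{B_*}$ by $\int_{E_\delta}|\nabla\phi|^2 d\lambda$ rather than forcing us to subtract it. This boundary-anchoring is exactly why the enlargement $E_\delta$ is present and why the constant $C_1$ must degenerate as $\delta\to0$. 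Making this absorption quantitative while holding the doubling dependence in the form $e^{C_2(n,c_0)R_2}$ is the delicate part; once it is in place the remaining constant bookkeeping is identical to~\cite{Maheux-Saloff-Coste1995,Grigor'yan-Saloff-Coste2005}.
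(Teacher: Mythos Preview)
The paper takes a different and considerably simpler route. It does \emph{not} build a Whitney cover with variable radii; instead it fixes a uniform-radius $\delta$-lattice of $E$: a maximal $\tfrac{\delta(R_2-R_1)}{3}$-separated set $(x_i)_{i\in I}\subset E$, with balls $B_i=B_{x_i}(2r)$ and dilates $\hat B_i=B_{x_i}(6r)$, $r=\delta(R_2-R_1)/6$. All balls have the same radius, so the cardinality $\sharp I$, the overlap numbers $N_I,\hat N_I$, and the volume ratio $C_V=\sup_{i,j}V_\lambda(B_i)/V_\lambda(B_j)$ are all bounded crudely from~(\ref{volume comparison}) alone, each in the form $C(n)\,e^{C(n)c_0 R_2}(R_2/r)^{C(n)}$. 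Your Whitney family with boundary balls shrinking in proportion to the distance to $\partial A(R_1,R_2)$ is overkill for an annulus: there is no multiscale boundary structure forcing it, and introducing shrinking balls brings in an infinite family and chain-length logarithms that the paper avoids entirely.

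The treatment of the anchor term is also structurally different. The paper writes
\[
\int_E|\phi|^{\frac{2n}{n-2}}\,d\lambda\ \le\ 2^{\frac{n+2}{n-2}}\sum_I\int_{B_i}|\phi-\phi_{B_i}|^{\frac{2n}{n-2}}\,d\lambda\ +\ 2^{\frac{n+2}{n-2}}\sum_I \phi_{B_i}^{\frac{2n}{n-2}}V_\lambda(B_i)\ =:\ J_1+J_2,
\]
bounds $J_1$ directly by Lemma~\ref{NSI on balls}, and bounds $J_2$ by a discrete Poincar\'e inequality on the finite intersection graph (Lemma~\ref{discrete PI on finite graph} from Minerbe): $\sum_I|\phi_{B_i}|^p\le(\sharp I)^p\sum_{(i,j)\in\mathcal E}|\phi_{B_i}-\phi_{B_j}|^p$. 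The edge differences $|\phi_{B_i}-\phi_{B_j}|$ are then controlled by Lemma~\ref{NSI on balls} on $\hat B_i\supset B_i\cup B_j$, and it is here that the enlargement $E_\delta$ enters. There is no single anchor ball, no geodesic chain, no telescoping; the price is a brutal factor $(\sharp I)^{2n/(n-2)}C_V$, which is harmless because it still has the shape $e^{C_2(n,c_0)R_2}$.

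By contrast, the step you yourself flag as the main obstacle---placing $B_*$ in your shrinking Whitney boundary family and invoking an unspecified ``radial co-area/Poincar\'e estimate across the collar'' to absorb $|\phi_{B_*}|\,V_\lambda(E)^{(n-2)/(2n)}$---is not really a plan. A Whitney boundary ball has arbitrarily small radius, so $\phi_{B_*}$ just samples $\phi$ near a single boundary point; controlling it by $\int_{E_\delta}|\nabla\phi|^2$ would in any case require a Poincar\'e inequality on a ball of \emph{fixed} scale straddling $\partial E$ and the region in $E_\delta\setminus E$ where $\phi$ vanishes---and once you allow such a ball you have effectively reverted to the uniform-lattice picture. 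In short, your Whitney scheme manufactures the very difficulty that the paper's uniform lattice together with the discrete graph lemma disposes of in one line.
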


\begin{proof}
Let $E$ be a connected component of  $A(R_1,R_2)$. Choose a $\delta$-lattice of $E$, i.e. a maximal set $(x_i)_{i \in I}$ in $E$ such that $d(x_i,x_j)\geq \delta(R_2-R_1)/3$ when $i \neq j$. For convenience, let $r=\delta (R_2-R_1)/6$, and let $\tilde{B_i}, B_i, \hat{B_i}$ denote $B_{x_i}(r), B_{x_i}(2r), B_{x_i}(6r)$ respectively. Then $\tilde{B_i} \cap \tilde{B_j} = \varnothing$ whenever $i \neq j$, and $(B_i)_{ i \in I}$ is a finite cover for $E$. By (\ref{volume comparison}), we can estimate
\[ \sharp I \leq e^{2c_0R_2} \left(\frac{2R_2}{r}\right)^n.
\]
Let $N_I$ be the maximal intersection number, i.e. any $B_i$ can intersect nontrivially with at most $N_I$ balls in the family $(B_i)_{ i\in I}$. Then
\[ N_I \leq e^{c_0 R_2}6^n.
\]
Similarly we can define and estimate
\[ \hat{N_I} \leq e^{c_0 R_2} 18^n.
\]
We also need to compare the volume of different balls in this family, let
\[ C_V= \sup_{i,j \in I} \frac{V_\lambda(x_i,2r)}{V_\lambda(x_j,2r)},
\]
then
\[C_V \leq e^{2c_0 R_2}\left(\frac{R_2}{r}\right)^n.
\]

For any $\phi \in C_0^1(E_{-\delta})$,
\[ \begin{split}
         & \int_E |\phi|^{\frac{2n}{n-2}} d\lambda \\
    \leq & \sum_I \int_{B_i} |\phi|^{\frac{2n}{n-2}} d\lambda \\
    \leq & 2^{\frac{n+2}{n-2}}\sum_I \int_{B_i} |\phi-\phi_{B_i}|^{\frac{2n}{n-2}} d\lambda + 2^{\frac{n+2}{n-2}}\sum_I \int_{B_i} \phi_{B_i}^{\frac{2n}{n-2}} d\lambda \\
    =    & J_1 + J_2.
   \end{split}
\]
By Lemma \ref{NSI on balls},
\[ \begin{split}
      J_1  & \leq 2^{\frac{n+2}{n-2}} \sum_I \left( C_1 e^{C_2 R_2}\frac{(2r)^2}{V_\lambda(B_i)^{2/n}} \int_{B_i} |\nabla \phi|^2 d\lambda \right)^{\frac{n}{n-2}} \\
           & \leq 2^{\frac{n+2}{n-2}}N_I \left( C_1e^{(4c_0/n+C_2)R_2} \frac{R_2^2}{V_\lambda(B_p(R_2))^{2/n}}\int_{E_\delta} |\nabla \phi|^2d\lambda \right)^{\frac{n}{n-2}},
   \end{split}
\]
where $E_\delta$ is the $\delta(R_2-R_1)$-neighbourhood of $E$.
To estimate $J_2$, we need a lemma from \cite{Minerbe2009}:
\begin{lemma}[V. Minerbe]\label{discrete PI on finite graph}
Let $\mathcal{G}=(\mathcal{V},\mathcal{E})$ be a finite graph, $\mathcal{V}$ is the set of vertices and $\mathcal{E}$ is the set of edges. Fix $p\geq 1$, then for any function $f$ on $\mathcal{V}$ which vanishes at some points,
\[ \sum_{i\in \mathcal{V}} |f(i)|^p \leq (\sharp \mathcal{V})^p \sum_{(i,j)\in \mathcal{E}} |f(i)-f(j)|^p.
\]
\end{lemma}

In our setting, we define a graph by letting $\mathcal{V}=I$ and $(i,j)\in \mathcal{E}$ if and only if $B_i \cap B_j \neq \emptyset$. Let's denote $\phi_i=\phi_{B_i}$.
\[ \begin{split}
     2^{-\frac{n+2}{n-2}}J_2  =  & \sum_I \phi_i^{\frac{2n}{n-2}} V_\lambda(B_i) \\
                            \leq & \max_{i \in I} V_\lambda(B_i) \sum_I \phi_i^{\frac{2n}{n-2}} \\
                            \leq & \max_{i \in I} V_\lambda(B_i) (\sharp I)^{\frac{2n}{n-2}} \sum_\mathcal{E} |\phi_i- \phi_j|^{\frac{2n}{n-2}} \\
                            \leq & C_V (\sharp I)^{\frac{2n}{n-2}} \sum_\mathcal{E} |\phi_i- \phi_j|^{\frac{2n}{n-2}} \max{(V_\lambda(B_i),V_\lambda(B_j))},
   \end{split}
\]
where we used Lemma \ref{discrete PI on finite graph} in the second inequality.
\[ \begin{split}
          & \sum_\mathcal{E} |\phi_i- \phi_j|^{\frac{2n}{n-2}} \max{(V_\lambda(B_i),V_\lambda(B_j))}\\
        = & \sum_\mathcal{E} \frac{\max(V_\lambda(B_i),V_\lambda(B_j))}{V_\lambda(B_i)^{2n/(n-2)}V_\lambda(B_j)^{2n/(n-2)}} \left| \int_{B_i}\int_{B_j} (\phi(x)-\phi(y)) d\lambda(x)d\lambda(y) \right|^{\frac{2n}{n-2}} \\
     \leq & \sum_\mathcal{E} \frac{\max(V_\lambda(B_i),V_\lambda(B_j))}{V_\lambda(B_i)V_\lambda(B_j)} \int_{B_i}\int_{B_j} |\phi(x)-\phi(y)|^{\frac{2n}{n-2}}  d\lambda(x)d\lambda(y) \\
     \leq & \sum_\mathcal{E} \frac{1}{V_\lambda(B_i)} \int_{\hat{B}_i}\int_{\hat{B}_i} |\phi(x)-\phi(y)|^{\frac{2n}{n-2}}  d\lambda(x)d\lambda(y) \\
     \leq & 2^{\frac{n+2}{n-2}} N_I \sum_I \frac{V_\lambda(\hat{B_i})}{V_\lambda(B_i)}\int_{\hat{B}_i}|\phi-\phi_{\hat{B}_i}|^{\frac{2n}{n-2}} d\lambda,
   \end{split}
\]
the second last inequality comes from the observation that $B_j\subset \hat{B_i}$ when $B_i\cap B_j\neq \emptyset$. Apply volume comparison (\ref{volume comparison}) and Lemma \ref{NSI on balls}, we get
\[ \begin{split}
      J_2 \leq & 2^{\frac{2(n+2)}{n-2}} N_I C_V (\sharp I)^{\frac{2n}{n-2}}e^{c_0 R_2} 3^n\sum_I \left(C_1 e^{ C_2R_2} \frac{(6r)^2}{V_\lambda(\hat{B_i})^{2/n}} \int_{\hat{B}_i}|\nabla \phi|^2 d\lambda \right)^{\frac{n}{n-2}} \\
          \leq & 2^{\frac{2(n+2)}{n-2}} N_I \hat{N_I} C_V (\sharp I)^{\frac{2n}{n-2}}e^{c_0 R_2} 3^n \left( C_1 e^{ (4c_0/n+C_2)R_2} \frac{R_2^2}{V_\lambda(B_p(R_2))^{2/n}} \int_{E_\delta}|\nabla \phi|^2 d\lambda \right)^{\frac{n}{n-2}}.
   \end{split}
\]

Using the above estimates, we can finish the proof of Lemma \ref{DSI on annuli}.
\end{proof}

Now we can glue up local sobolev inequalities to get a global weighted sobolev inequality.
\begin{theorem}\label{thm: weighted sobolev inequality for metric measure space}
Let $(M,g,d\lambda)$ be a smooth weighted Riemannian manifold satisfying property (\ref{jacobian comparison}), then there exist constants $\alpha(n,c_0)$ and $C_S(n)$, such that for any $\phi \in C_0^1(M)$,
\[ \left(\int_M |\phi|^\frac{2n}{n-2}\left(\frac{V_\lambda(r)}{r^n}\right)^\frac{2}{n-2}e^{-\alpha r} d\lambda\right)^\frac{n-2}{n}\leq C_S \int_M \phi^2 + |\nabla \phi|^2 d\lambda.
\]
\end{theorem}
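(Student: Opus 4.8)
The plan is to glue the local Dirichlet--Sobolev inequalities of Lemma~\ref{DSI on annuli} over a dyadic decomposition of $M$ into annuli, using a partition of unity so as to apply them only to compactly supported test functions and thereby avoid any boundary term issues. Write $\rho(r)=V_\lambda(r)/r^n$ for the weighted volume ratio, set $q=\tfrac{2n}{n-2}$, and abbreviate the weight by $W(r)=\rho(r)^{2/(n-2)}e^{-\alpha r}$. Decompose $M$ into the central ball $B_*=B_p(2)$ and the dyadic annuli $A_k=B_p(2^{k+1})\setminus B_p(2^k)$, $k\ge 1$, and fix a subordinate partition of unity $\{\chi_*,\chi_k\}$ with $\sum\chi=1$, $\operatorname{supp}\chi_*\subset B_p(4)$, $\operatorname{supp}\chi_k\subset \widehat A_k:=B_p(2^{k+2})\setminus B_p(2^{k-1})$, $0\le\chi\le 1$, $|\nabla\chi_k|\le C(n)2^{-k}$ and $|\nabla\chi_*|\le C(n)$; the enlarged pieces have overlap multiplicity bounded by an absolute constant $m=m(n)$.

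First I would pass from $\phi$ to the pieces $\chi\phi$. Since at most $m$ cutoffs are nonzero at any point and $\sum\chi=1$, H\"older gives $1\le m^{q-1}\sum\chi^q$ pointwise, hence $\int_M|\phi|^qW\,d\lambda\le m^{q-1}\big(\int_{B_p(4)}(\chi_*|\phi|)^qW\,d\lambda+\sum_{k\ge1}\int_{\widehat A_k}(\chi_k|\phi|)^qW\,d\lambda\big)$. Raising to the power $\tfrac{n-2}{n}$ and using subadditivity of $t\mapsto t^{(n-2)/n}$ (an exponent in $(0,1)$), it suffices to estimate each annular term $\big(\int_{\widehat A_k}(\chi_k|\phi|)^qW\big)^{(n-2)/n}$ and the central term separately, and then sum.

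For an exterior annulus I would bound $W$ by its supremum on $\widehat A_k$ and apply Lemma~\ref{DSI on annuli} to $\chi_k\phi$ on each connected component $E$ of $\widehat A_k$; since $\chi_k\phi$ vanishes near both bounding spheres of $\widehat A_k$ it is a legitimate compactly supported test function on $E$, and summing over components (with the overlap of the enlarged components controlled by (\ref{volume comparison})) yields $\big(\int_{\widehat A_k}(\chi_k|\phi|)^q\big)^{(n-2)/n}\le C_k^{\mathrm{Sob}}\int_{\widehat A_k}|\nabla(\chi_k\phi)|^2\,d\lambda$ with $C_k^{\mathrm{Sob}}\sim C_1e^{C_2 2^{k}}\rho(2^{k+2})^{-2/n}$, the last factor arising from the identity $R_2^2/V_\lambda(R_2)^{2/n}=\rho(R_2)^{-2/n}$. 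Expanding $|\nabla(\chi_k\phi)|^2\le 2|\nabla\phi|^2+C2^{-2k}\phi^2$, the annular contribution is governed by the product $(\sup_{\widehat A_k}W)^{(n-2)/n}C_k^{\mathrm{Sob}}$. \emph{This product is the main obstacle.} Using (\ref{volume comparison}) to compare $\sup_{\widehat A_k}\rho$ with $\rho(2^{k+2})$ up to a factor $e^{c_0 2^{k}}$, the $\rho$-powers cancel and the product collapses to a pure exponential $C(n)\exp\!\big(2^{k}[\,C(C_2+c_0)-\alpha\tfrac{n-2}{n}\,]\big)$; choosing $\alpha=\alpha(n,c_0)$ large enough makes the bracket negative, so the product is $\le C(n)e^{-\beta 2^k}$ with $\beta>0$. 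Summing over $k\ge1$, using $2^{-2k}\le 1$ and the bounded overlap of the $\widehat A_k$, bounds the exterior part by $C(n)\int_M(\phi^2+|\nabla\phi|^2)\,d\lambda$. The exponential weight $e^{-\alpha r}$ is exactly what absorbs the radius-dependent Sobolev and doubling constants $e^{C_2 r},e^{c_0 r}$, which is why it is indispensable.

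Finally, for the central ball I would use the Neumann inequality Lemma~\ref{NSI on balls} rather than a Dirichlet one, applied to $\chi_*\phi$ on $B_p(4)$: splitting $\chi_*\phi$ into its mean $(\chi_*\phi)_B$ and fluctuation, the fluctuation is controlled by Lemma~\ref{NSI on balls} (producing $\int|\nabla\phi|^2$ and, via $|\nabla\chi_*|\le C$, also $\int\phi^2$), while the mean contributes $((\chi_*\phi)_B^qV_\lambda(B))^{(n-2)/n}=(\chi_*\phi)_B^2V_\lambda(B)^{(n-2)/n}\le V_\lambda(B)^{-2/n}\int_B\phi^2$ after Cauchy--Schwarz. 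Bounding $W$ by its supremum on $B_p(4)$ and using (\ref{volume comparison}) at the fixed scale to compare the volume factors, the central term is $\le C\int_{B_p(4)}(\phi^2+|\nabla\phi|^2)$. It is precisely this mean-value term on the innermost ball --- where no Dirichlet inequality is available and where the annular coefficient $2^{-2k}$ would blow up as $k\to-\infty$ --- that forces the $\int\phi^2$ term on the right-hand side. Adding the central and exterior estimates gives the claimed inequality with $\alpha=\alpha(n,c_0)$ and $C_S$ of the stated form.
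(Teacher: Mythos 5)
Your overall strategy is the same as the paper's --- dyadic annuli, Lemma \ref{DSI on annuli} on the pieces, and the factor $e^{-\alpha r}$ absorbing the exponentially growing Sobolev and doubling constants --- but the two places where you deviate from the paper are exactly the places where your argument breaks. The first is the component-counting step. You apply Lemma \ref{DSI on annuli} to each connected component $E$ of $\widehat A_k$ and then sum, asserting that the overlap of the enlarged sets $E_\delta$ is ``controlled by (\ref{volume comparison})''. It is not. A connected component of $B_p(R_2)\setminus B_p(R_1)$ always touches the inner sphere (follow a minimal geodesic from $p$), but it need not reach the outer one: it can be an arbitrarily small bump protruding just past radius $R_1$, and there can be arbitrarily many such bumps, arbitrarily close to one another in $M$ (separated only by regions dipping back inside $B_p(R_1)$). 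Volume comparison counts disjoint sets only when each carries a definite amount of volume, which these bumps do not; so neither the number of components nor the pointwise multiplicity of the family $\{E_\delta\}$ --- which you need in order to sum the right-hand sides $\int_{E_\delta}|\nabla(\chi_k\phi)|^2\,d\lambda$ --- follows from (\ref{volume comparison}). And since $\chi_k$ does not vanish on these bumps (they sit in the ramp region of your cutoff), they genuinely contribute. This is precisely what the paper's merging procedure is for: a component not connected to the next annulus is absorbed into the adjacent piece of the previous one, so that every surviving piece stretches across a full dyadic scale, hence contains a ball of radius $R_{i-1}$, and only then does (\ref{volume comparison}) bound the number of pieces by $e^{c_0R_{i+3}}16^n$, an exponential that $e^{-\alpha r}$ then absorbs. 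Your write-up needs this device (or an equivalent one); granted it, your radial partition of unity would indeed be a simplification of the paper's, whose cutoffs carry the piece-counting constants in their gradients.

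The second gap is the central ball. The claim that bounding $W$ by its supremum on $B_p(4)$ and ``using (\ref{volume comparison}) at the fixed scale to compare the volume factors'' yields a bound $C(n,c_0)\int(\phi^2+|\nabla\phi|^2)\,d\lambda$ is false, because (\ref{volume comparison}) goes the wrong way: it makes $r\mapsto V_\lambda(r)/r^n$ almost nonincreasing, i.e.\ it bounds $V_\lambda(4)/4^n$ above by $V_\lambda(r)/r^n$, never the reverse. Your central coefficient is of size $\bigl(\sup_{0<r\le 4}V_\lambda(r)r^{-n}\bigr)^{2/n}\bigl(V_\lambda(4)4^{-n}\bigr)^{-2/n}$, and under hypothesis (\ref{jacobian comparison}) this is unbounded: on $\epsilon\mathbb{S}^1\times\R^{n-1}$ (flat, $w\equiv 1$, so $c_0=0$) the supremum is a dimensional constant while $V_\lambda(4)/4^n\simeq\epsilon$, so the coefficient blows up like $\epsilon^{-2/n}$. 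The inequality is still true in such collapsed examples, but only because the weight $(V_\lambda(r)/r^n)^{2/(n-2)}$ decays in $r$ at exactly the Hardy--Sobolev rate; any correct argument must pair the weight and the Sobolev constant at comparable radii --- this is what the paper does on the outer annuli, where the weight evaluated at $R_{i-1}$ meets the Sobolev constant at $R_{i+2}$ and the ratio is bounded by $8^{2n/(n-2)}$ simply because $V_\lambda(R_{i-1})\le V_\lambda(R_{i+2})$ and $R_{i+2}/R_{i-1}=8$ --- rather than freeze the scale at $4$ and take a supremum, which throws that decay away. (To be fair, the paper itself is terse at the innermost scale, where its coefficient $\rho(R_{i-1})$ is not defined for $i=0$; your instinct to treat the center separately is reasonable, but the execution must use scale-matching, not a supremum bound.)
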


\begin{proof}
First we decompose $M$ into connected components of annuli.
Choose
\[ R_0=0, R_i=2^i, i=1,2,3,...,
\]
let
\[ A_i=B_p(R_{i+1})-B_p(R_i), i=0,1,2,...
\]
and denote the connected components of $A_i$ as
\[ E_{i1}, E_{i2},...,E_{il_i}.
\]
If $E_{ij}$ is not connected to $A_{i+1}$, then it must be connected to $E_{i-1,k}$ for some $k$, in this case, we delete $E_{ij}$ from the list and merge it to $E_{i-1,k}$. Similarly, if $E_{ij}$ is not connected to $A_{i-1}$, we merge it to some connected component of $A_{i+1}$.

Finally we let $U_{ij}=E_{ij}\cup E_{i-1,k}\cup E_{i+1,l}$ where the union is for all $E_{i-1,k}$ and $E_{i+1,l}$ connected to $E_{ij}$ in $A(R_{i-1}, R_{i+2})$.

Since every $E_{ij}$ contains a ball of radius $R_{i-1}$, we can use the volume comparison property (\ref{volume comparison}) to estimate
\[ l_i \leq e^{c_0 R_{i+3}} 16^n.
\]

We choose smooth nonnegative cut-off functions $\psi_{ij}=1$ on $E_{ij}$ and $\psi_{ij}=0$ on $M\backslash (U_{ij})_{-\delta}$, and define a partition of unity
\[ \eta_{ij}= \frac{\psi_{ij}}{\sum \psi_{ij}}.
\]
$\eta_{ij}$ is well-defined since the sum is finite and positive at every point on $M$. We can choose $\psi_{ij}$ properly so that $|\nabla \psi_{ij}|\leq 1$ everywhere. Then we can estimate the gradient of $\eta_{ij}$
\[ \begin{split}
     |\nabla \eta_{ij}|= & \left| \frac{\nabla\psi_{ij}\sum_{I(ij)} \psi_{pq}- \psi_{ij} \sum_{I(ij)} \nabla \psi_{pq}}{\left(\sum \psi_{ij}\right)^2} \right| \\
                    \leq & 1+ \sharp I(ij),
   \end{split}
\]
where $I(ij)=\{(pq)|U_{pq}\cap U_{ij}\neq \emptyset\}$. It's clear to see
\[ N_i:= \sup_j\sharp I(ij) \leq l_{i-2}+l_{i-1}+l_i+l_{i+1}+l_{i+2}.
\]

Let
\[ \rho(r)=\frac{V_\lambda(p,r)^{\frac{2}{n-2}}}{r^{\frac{2n}{n-2}}},
\]
and let $\alpha$ be a number to be determined. For any $\phi \in C_0^1(M)$,
\[ \begin{split}
          & \int_M |\phi|^\frac{2n}{n-2}\rho(r)e^{-\alpha r} d\lambda \\
         =& \int_M |\sum \eta_{ij}\phi|^\frac{2n}{n-2}\rho(r)e^{-\alpha r} d\lambda\\
     \leq & \sum_{i=0}^\infty N_i^\frac{n+2}{n-2} \sum_{j=0}^{l_i} \int_{U_{ij}} |\eta_{ij}\phi|^\frac{2n}{n-2}\rho(r)e^{-\alpha r} d\lambda\\
     \leq & \sum_{i=0}^\infty N_i^\frac{n+2}{n-2} \rho(R_{i-1})e^{\frac{2c_0}{n-2}R_{i+2}-\alpha R_{i-1}} \sum_{j=0}^{l_i} \int_{U_{ij}} |\eta_{ij}\phi|^\frac{2n}{n-2} d\lambda.\\
   \end{split}
\]
By Lemma \ref{DSI on annuli}, and the gradient estimate for the partition of unity, we get
\[\int_{U_{ij}} |\eta_{ij}\phi|^\frac{2n}{n-2} d\lambda \leq \left( C_1 e^{C_2 R_{i+2}} \rho(R_{i+2})^{-\frac{n-2}{n}} \int_{(U_{ij})_\delta} |\nabla (\eta_{ij}\phi)|^2 d\lambda \right)^{\frac{n}{n-2}},
\]
and
\[ \int_{U_{ij}} |\nabla (\eta_{ij}\phi)|^2 d\lambda \leq N_i \int_{(U_{ij})_\delta} \phi^2 d\lambda + \int_{(U_{ij})_\delta} \eta_{ij}^2|\nabla \phi|^2 d\lambda.
\]
Hence using the estimates for $N_i$ we derive
\[  \int_M |\phi|^\frac{2n}{n-2}\rho(r)e^{-\alpha r} d\lambda \leq \sum_{i=0}^\infty C(n,C_1)e^{(C(n,c_0,C_2)-\alpha)R_{i-1}} \left(\int_{(A_i)_\delta} \phi^2 + |\nabla \phi|^2 d\lambda \right)^\frac{n}{n-2}.
\]

Let $\alpha > C(n,c_0,C_2)$ and fix $\delta << 1$, we get
\[ \left(\int_M |\phi|^\frac{2n}{n-2}\rho(r)e^{-\alpha r} d\lambda\right)^\frac{n-2}{n}\leq 2C(n,C_1) \int_M \phi^2 + |\nabla \phi|^2 d\lambda.
\]
\end{proof}

If in addition, there is another measure $d\mu$ on $M$, such that $(M,g)$ satisfies a weighted Poincare inequality:
\[ \int_M \phi^2 d\lambda \leq C_P \int_M |\nabla \phi|^2 d\mu.
\]
Then there is an Euclidean type weighted sobolev inequality on M:
\[ \left(\int_M |\phi|^\frac{2n}{n-2}\rho(r)e^{-\alpha r} d\lambda\right)^\frac{n-2}{n}\leq C_S(1+C_P) \int_M |\nabla \phi|^2 (1+\frac{d\lambda}{d\mu})d\mu.
\]

\section{On nontrivial gradient steay Ricci solitons}
\label{steady ricci soliton}

Let's first recall some facts about gradient steady Ricci solitons. We say a Ricci soliton $(M,g,f)$ is nontrivial if the potential function $f$ is nonconstant. Let $R$ be the scalar curvature, it is well-known that on a gradient steady Ricci Soliton, $R+|\nabla f|^2$ is a constant (\cite{Hamilton1993}), which is non-zero if and only if $f$ is nonconstant. We can normalize the metric so that
 \begin{equation}\label{normalization}
   R+|\nabla f|^2=1.
 \end{equation}
 This equation implies that $f$ has at most linear growth. By possibly adding a constant we can let
\[|f(x)|\leq r_p(x)\]
for some $p\in M$.

Denote $dv_f=e^{-f}dv$ where $dv$ is the Riemannian volume form. Since a gradient steady Ricci soliton has nonnegative $\infty-$Bakry-Emery Ricci tensor
\[ Ric+Hess(f)=0,
\]
the result in \cite{Wei-Wylie2009} implies that $(M,g,dv_f)$ satisfies the exponential Jacobian comparison property (\ref{jacobian comparison}) in section \ref{weightedSI}, with $c_0=12$.

Moreover, Munteanu and Wang proved in \cite{Munteanu-Wang2011} that gradient steady Ricci solitons with $R+|\nabla f|^2=\Lambda>0$ have positive bottom f-spectrum $\Lambda/4$, i.e.
\[ \inf_{\phi \in C^1_0(M)} \frac{\int_M |\nabla \phi|^2 dv_f}{\int_M \phi^2 dv_f} = \frac{\Lambda}{4}.
\]

Therefore we can use results in section \ref{weightedSI} to prove
\begin{theorem}\label{weighted SI on Steady Soliton}
Let $(M,g,f)$ be a nontrivial gradient steady Ricci soliton with dimension $n\geq 3$, suppose it is normalized that $|f(x)|\leq r_p(x)$ for some $p\in M$, then for any $\phi \in C^1_0(M)$,
\[\left(\int_M |\phi|^\frac{2n}{n-2} \left(\frac{V_f(p,r)}{r^n}\right)^\frac{2}{n-2} e^{-\alpha r} dv_f\right)^\frac{n-2}{n} \leq C_S \int_M |\nabla \phi|^2 dv_f,
\]
where $\alpha$ and $C_S$ are constants depending only on $n$, and $V_f(p,r)=\int_{B_p(r)} dv_f$.
\end{theorem}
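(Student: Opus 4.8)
The plan is to obtain the Euclidean-type inequality by combining two facts already established above: the global weighted Sobolev inequality carrying an $L^2$ term (the unnamed Theorem of Section~\ref{weightedSI}), and the Munteanu--Wang positivity of the bottom $f$-spectrum. Since both tools are in hand, the argument is short; essentially all the work has been done in the local estimates and the gluing, and what remains is to verify the hypotheses and to recognize that the $f$-spectrum is exactly the ingredient that removes the $L^2$ term.

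First I would check the geometric hypothesis. Because a gradient steady Ricci soliton satisfies $Ric+Hess(f)=0$, the Wei--Wylie comparison shows that the weighted manifold $(M,g,dv_f)$ enjoys the exponential Jacobian comparison property (\ref{jacobian comparison}) with the explicit universal constant $c_0=12$. Taking $d\lambda=dv_f$ (so that $V_\lambda=V_f$), the global weighted Sobolev inequality of Section~\ref{weightedSI} applies verbatim and yields
\[ \left(\int_M |\phi|^\frac{2n}{n-2}\left(\frac{V_f(p,r)}{r^n}\right)^\frac{2}{n-2}e^{-\alpha r}\, dv_f\right)^\frac{n-2}{n}\leq C_S \int_M \phi^2 + |\nabla \phi|^2\, dv_f \]
for all $\phi\in C^1_0(M)$, with $\alpha=\alpha(n,c_0)$ and $C_S=C_S(n)$. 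Since $c_0=12$ is universal, $\alpha$ in fact depends only on $n$.

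Next I would use the normalization $R+|\nabla f|^2=1$ together with the nontriviality of the soliton. By the Munteanu--Wang theorem the bottom $f$-spectrum equals $\Lambda/4=1/4>0$, which is precisely the weighted Poincar\'e inequality
\[ \int_M \phi^2\, dv_f \leq 4\int_M |\nabla \phi|^2\, dv_f. \]
Invoking the Euclidean-type corollary stated right after the global Sobolev theorem, with the choice $d\mu=d\lambda=dv_f$ (so that $d\lambda/d\mu\equiv 1$ and $C_P=4$), absorbs the $L^2$ term into the gradient term: the right-hand side $\int_M \phi^2+|\nabla\phi|^2\, dv_f$ is bounded by $(1+4)\int_M |\nabla\phi|^2\, dv_f$. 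Relabeling the resulting constant $5C_S$ as $C_S$ gives the asserted inequality, with $C_S$ still depending only on $n$.

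The only genuinely delicate point is that this last step uses the nontriviality essentially: positivity of the $f$-spectrum fails for a Ricci-flat metric with constant potential, where the bottom spectrum is $0$, so there the $L^2$ term could not be absorbed. This is exactly the obstruction noted in the introduction preventing the method from extending to Ricci-flat manifolds, and it is why the hypothesis $\Lambda>0$ (equivalently, $f$ nonconstant) is indispensable.
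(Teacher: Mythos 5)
Your proposal is correct and follows essentially the same route the paper intends: verify the exponential Jacobian comparison (\ref{jacobian comparison}) with $c_0=12$ via Wei--Wylie, apply the global weighted Sobolev inequality of Section \ref{weightedSI} with $d\lambda = dv_f$, and absorb the $L^2$ term using the Munteanu--Wang weighted Poincar\'e inequality with $C_P = 4$ via the Euclidean-type corollary. The paper leaves exactly this assembly implicit ("we can use results in section \ref{weightedSI} to prove"), and your write-up fills it in faithfully, including the correct observation that nontriviality ($\Lambda>0$) is what makes the absorption possible.
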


Now recall that the Riemann curvature tensor of a gradient steady Ricci soliton satisfies an elliptic partial differential system:
\begin{equation}\label{eqn for Rm on soliton} -\Delta_f Rm= Q(Rm),
\end{equation}
where $\Delta_f=\Delta - \langle \nabla f, \cdot\rangle$ is the self-adjoint Laplacian with respect to $dv_f$, $Q(Rm)$ is a quadratic term in $Rm$.

\begin{proof}[Proof of Theorem \ref{gapsoliton}]
We can scale the metric to let $\Lambda=1$, and modify $f$ by adding a constant so that $f(p)=0$.
Equation (\ref{eqn for Rm on soliton}) implies that
\[ -\Delta_f |Rm| \leq C(n) |Rm|^2.
\]
For each $R>1$, choose a radial cut-off function $\psi$ such that $\psi(r)=1$ when $0\leq r\leq R$, $\psi(r)=0$ when $r>2R$, and $|\nabla \psi|\leq 1$. Let $u=|Rm|^\frac{n}{4}$, then integrate by parts to get
\[ \begin{split}
     & \int_M \psi^2 |\nabla u|^2 dv_f \\
     = & \int_M-2\psi u \langle \nabla \psi, \nabla u\rangle -\psi^2 u \Delta_f u dv_f \\
     = & \int_M -2\psi u \langle \nabla \psi, \nabla u\rangle -\frac{n}{4}|Rm|^{\frac{n}{4}-1}\psi^2 u \Delta_f|Rm| -(1-\frac{4}{n})\psi^2 |\nabla u|^2 dv_f \\
     \leq & C(n)\int_M \psi^2 u^2 |Rm|dv_f + (\frac{4}{n}-\frac{1}{2})\int_M\psi^2|\nabla u|^2dv_f+2\int_M|\nabla \psi|^2u^2dv_f.
   \end{split}
\]
The assumption $n\geq 3$ implies $\frac{4}{n}-\frac{1}{2}\leq \frac{5}{6}$, hence
\[ \frac{1}{6}\int_M \psi^2 |\nabla u|^2 dv_f\leq  C(n)\int_M \psi^2 u^2 |Rm|dv_f+2\int_M|\nabla \psi|^2u^2dv_f.
\]
Theorem \ref{weighted SI on Steady Soliton} implies
\[ \begin{split}
          & \left( \int_M (\psi u)^\frac{2n}{n-2} \left(\frac{V_f(r)}{r^n}\right)^\frac{2}{n-2} e^{-\alpha r}dv_f\right)^\frac{n-2}{n} \\
     \leq & C_S \int_M |\nabla(\psi u)|^2 dv_f\\
      =   & 2C_S \int_M \psi^2|\nabla u|^2+ |\nabla \psi|^2u^2 dv_f\\
     \leq & C_S C(n)\int_M \psi^2 u^2|Rm| dv_f + 2C_S \int_M |\nabla \psi|^2u^2 dv_f\\
     \leq & C_S C(n) \left(\int_M |Rm|^{\frac{n}{2}}\frac{r^n}{V_f(r)}e^{\frac{n-2}{2}\alpha r} dv_f\right)^\frac{2}{n} \left( \int_M (\psi u)^\frac{2n}{n-2} \left(\frac{V_f(r)}{r^n}\right)^\frac{2}{n-2} e^{-\alpha r}dv_f\right)^\frac{n-2}{n} \\
          & + 2C_S \int_M |\nabla \psi|^2u^2 dv_f.
   \end{split}
\]
Suppose
\[ \left(\int_M |Rm|^{\frac{n}{2}}\frac{r^n}{V_f(r)}e^{\frac{n-2}{2}\alpha r} dv_f\right)^\frac{2}{n}\leq \epsilon < \frac{1}{C(n)C_S},
\]
then
\[\left( \int_M (\psi u)^\frac{2n}{n-2} \left(\frac{V_f(r)}{r^n}\right)^\frac{2}{n-2} e^{-\alpha r}dv_f\right)^\frac{n-2}{n} \leq \frac{2C_S}{1-C_SC(n)\epsilon} \int_{A(R,2R)}u^2 dv_f.
\]
Observe that
\[ \frac{1}{V_f(1)}\int_{A(R,2R)} u^2 dv_f \leq \int_{A(R,2R)} |Rm|^{\frac{n}{2}}\frac{r^n}{V_f(r)}e^{(8+\frac{n-2}{2}\alpha) r} dv_f.\]

Therefore if we take $c= 8+\frac{n-2}{2}\alpha$, then
\[ \int_{A(R,2R)}u^2 dv_f \to 0, \text{ as } R\to \infty,
\]
thus $u\equiv 0$ and $(M,g)$ has to be flat. The universal cover of a flat manifold is $\mathbb{R}^n$ with the Euclidean metric. Since $\nabla\nabla f\equiv 0$ and $|\nabla f|\equiv 1$, the pullback of $f$ to the covering  map will be a linear function.
\end{proof}

\begin{proof}[Proof of theorem \ref{gap by poincare}]
For any $R>0$, let $\psi$ be a cut-off function such that $\psi=1$ on $B_p(R)$, $\psi=0$ on $M\backslash B_p(2R)$ and $|\nabla \psi| \leq \frac{2}{R}$. Let $u=|Rm|^\alpha$. Apply the Poincare inequality by Munteanu-Wang to get
\[ \int_M \psi^2u^2dv_f \leq \frac{4}{\Lambda} \int_M |\nabla(\psi u)|^2 dv_f.
\]
Integration by parts yields
\[  \begin{split}\int_M \psi^2u^2dv_f & \leq \frac{4}{\Lambda} \left( \int_M |\nabla \psi|^2 u^2 dv_f -\int_M \psi^2 u \Delta_f u dv_f \right)\\
                                     & \leq  \frac{4}{\Lambda} \left( \frac{4}{R^2}\int_{B_p(2R)\backslash B_p(R)}  |Rm|^{2\alpha} dv_f -\int_M \psi^2 u \Delta_f u dv_f \right).
     \end{split}
\]
Since $\alpha\geq 1$, we can compute
\[ -\Delta_f u \leq \alpha C(n)|Rm| u.\]
Let $R\to \infty$, we get
\[ \int_M u^2 dv_f\leq \frac{4}{\Lambda} \alpha C(n)\int_M |Rm|u^2dv_f.
\]
The condition $\sup_M |Rm|< \frac{\Lambda}{4\alpha C(n)}$ will force $u$ to be identically $0$.
\end{proof}

\section{On Ricci-flat manifolds}
\label{Ricci-flat manifolds}

Before turning our attention to Ricci-flat manifolds, let's first define $(M,g)$ to be a complete Riemannian manifold of dimension $n$ with $Ric(g)\geq 0$. For simplicity of statements, let's fix an arbitrary point $p\in M$, denote the geodesic ball centered at $p$ with radius $r$ as $B(r)$, and denote its volume as $V(r)$. Integrations in this section are all with respect to the Riemannian volume form. It is well-known that $(M,g)$ has infinite volume, and the volume ratio $\frac{V(r)}{r^n}$ is nonincreasing.

Poincare inequalities on geodesic balls have been proved by P. Li and R. Schoen (\cite{Li-Schoen1984}).
For any $r>0$ and $\phi\in C_0^\infty(B(r))$,
\begin{equation}\label{DPI on balls}
\int_{B(r)} \phi^2\leq C_P r^2 \int_{B(r)}|\nabla \phi|^2,
\end{equation}
where $C_P$ depends only on $n$.
By a well-known result of Saloff-Coste (\cite{Saloff-Coste1992}) and (\ref{DPI on balls}), there are Euclidean type Sobolev inequalities on geodesic balls:
For any $\phi \in C_0^\infty(B(r))$,
\begin{equation}\label{DSI on balls}
\left(\int_{B(r)}\phi^\frac{2n}{n-2} \right)^\frac{n-2}{n}\leq C_S \frac{r^2}{V(r)^\frac{2}{n}}\int_{B(r)} |\nabla \phi|^2,
\end{equation}
where $C_S$ depends only on $n$.

Let $u$ and $h$ be nonnegative functions on a complete Riemannian manifold $(M,g)$ with nonnegative Ricci curvature, suppose $u$ satisfies almost everywhere a partial differential inequality
\[  -\Delta u \leq C_0 hu.
\]
We can apply (\ref{DSI on balls}) to prove the following vanishing theorem for $u$:
\begin{theorem}\label{vanishing theorem by local sobolev}
Suppose $|u|_{L^\frac{2n}{n-2}} < \infty$. Let
\[R_0=\inf\{R| |u|_{L^\frac{2n}{n-2}(B(R))}^2\geq 32C_S|u|_{L^\frac{2n}{n-2}(M\backslash (B(R)))}^2\}.
\]
If
\[\int_M |h|^\frac{n}{2}<\frac{1}{C_0^\frac{n}{2}(\frac{1}{8}+4C_S)^\frac{n}{2}}\frac{V(2R_0)}{(2R_0)^n},\]
then $u\equiv 0$.
\end{theorem}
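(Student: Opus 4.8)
The plan is to run a cutoff-and-absorption argument: test the differential inequality against $\psi^2 u$ for a well-chosen cutoff, convert the resulting Dirichlet energy into an $L^{\frac{2n}{n-2}}$ quantity by the local Sobolev inequality (\ref{DSI on balls}), and then show that the smallness of $\int_M h^{\frac{n}{2}}$ combined with the defining property of $R_0$ forces this norm to vanish. First I would dispose of the trivial case $u\equiv 0$ and check that $R_0$ is well defined with $R_0\in(0,\infty)$: since $|u|_{L^{\frac{2n}{n-2}}}<\infty$, the function $R\mapsto |u|_{L^{\frac{2n}{n-2}}(B(R))}^2-32C_S|u|_{L^{\frac{2n}{n-2}}(M\backslash B(R))}^2$ is continuous and nondecreasing, negative for small $R$ and positive for large $R$ when $u\not\equiv 0$; continuity at the infimum then yields the key inequality
\[
|u|_{L^{\frac{2n}{n-2}}(M\backslash B(R_0))}^2\leq \frac{1}{32C_S}\,|u|_{L^{\frac{2n}{n-2}}(B(R_0))}^2.
\]

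Next I would fix a cutoff $\psi$ with $\psi\equiv 1$ on $B(R_0)$, $\psi\equiv 0$ off $B(2R_0)$, and $|\nabla\psi|\leq 1/R_0$. Multiplying $-\Delta u\leq C_0 h u$ by $\psi^2 u\geq 0$ and integrating by parts, the cross term $2\int \psi u\langle\nabla\psi,\nabla u\rangle$ is absorbed exactly, giving the clean bound $\int|\nabla(\psi u)|^2\leq C_0\int h(\psi u)^2+\int u^2|\nabla\psi|^2$ (after the routine approximation needed to apply (\ref{DSI on balls}) to $\psi u$, which is only Lipschitz a priori). Applying (\ref{DSI on balls}) on $B(2R_0)$ turns the left-hand side into $I:=\big(\int(\psi u)^{\frac{2n}{n-2}}\big)^{\frac{n-2}{n}}$ up to the factor $C_S(2R_0)^2/V(2R_0)^{2/n}$. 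I would then estimate the two right-hand terms: Hölder with exponents $\frac{n}{2}$ and $\frac{n}{n-2}$ bounds the first by $\big(\int_M h^{\frac{n}{2}}\big)^{2/n}I$, while on the annulus $B(2R_0)\backslash B(R_0)$, combining $|\nabla\psi|\leq 1/R_0$, Hölder, $V(B(2R_0)\backslash B(R_0))\leq V(2R_0)$, and the $R_0$-inequality above bounds the second by $\frac{V(2R_0)^{2/n}}{32C_S R_0^2}I$.

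After multiplying through by the Sobolev factor, the gradient contribution collapses to the clean constant $\tfrac18 I$, leaving
\[
I\leq \Big(\tfrac18+\tfrac{4C_S C_0 R_0^2}{V(2R_0)^{2/n}}\big(\textstyle\int_M h^{\frac{n}{2}}\big)^{2/n}\Big)I.
\]
The hypothesis $\int_M h^{\frac{n}{2}}<C_0^{-n/2}(\tfrac18+4C_S)^{-n/2}V(2R_0)/(2R_0)^n$ makes the $h$-term at most $\frac{C_S}{\frac18+4C_S}<\tfrac14$, so the bracketed coefficient is at most $\tfrac38<1$; hence $I\leq 0$, forcing $\psi u\equiv 0$ and in particular $u\equiv 0$ on $B(R_0)$. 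The $R_0$-inequality then gives $|u|_{L^{\frac{2n}{n-2}}(M\backslash B(R_0))}=0$ as well, so $u\equiv 0$ on all of $M$.

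I expect the main obstacle to be the careful bookkeeping of constants so that the final coefficient is provably strictly less than $1$ — specifically verifying that the stated threshold genuinely produces $\tfrac18+(\text{$h$-term})<1$ — together with justifying the well-definedness of $R_0$ and the use of its defining inequality \emph{at} $R=R_0$ (the boundary of the infimum); by contrast, the integration by parts and the two Hölder estimates are routine once $\psi u$ is compactly supported.
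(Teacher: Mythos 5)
Your proof is correct, and it is leaner than the paper's. Both arguments use the same toolkit --- a cutoff supported in $B(2R_0)$, the local Sobolev inequality (\ref{DSI on balls}), H\"older, and absorption via the defining inequality of $R_0$ --- but the structure genuinely differs. The paper's proof is two-step: it first sends the cutoff radius $R\to\infty$ to obtain the global energy estimate $\int_M|\nabla u|^2\leq C_0\int_M hu^2$ (a step that needs the Bishop--Gromov bound $V(2R)\leq C R^n$ to kill the annulus term), then applies (\ref{DSI on balls}) at scale $R_0$ to get $\bigl(\int_{B(R_0)}u^{\frac{2n}{n-2}}\bigr)^{\frac{n-2}{n}}\leq \frac{16C_SR_0^2}{V(2R_0)^{2/n}}\int_{B(2R_0)}|\nabla u|^2$, and closes the self-improvement loop in the Dirichlet energy $\int_M|\nabla u|^2$, whose coefficient under the stated threshold comes out exactly $<1$; having forced $\nabla u\equiv 0$, it still needs the infinite volume of a complete noncompact manifold with $Ric\geq 0$ to conclude that the constant $u$ vanishes. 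You instead close the loop entirely at scale $R_0$ in the Sobolev quantity $I=\bigl(\int(\psi u)^{\frac{2n}{n-2}}\bigr)^{\frac{n-2}{n}}$ itself, obtaining $I\leq\bigl(\tfrac18+\tfrac{C_S}{1/8+4C_S}\bigr)I\leq\tfrac38 I$ and hence $I=0$ (absorption is legitimate since $I\leq|u|^2_{L^{2n/(n-2)}(M)}<\infty$); vanishing on $B(R_0)$ then propagates to all of $M$ through the $R_0$-inequality. Your route buys: no $R\to\infty$ limit, no use of volume monotonicity beyond what is already inside (\ref{DSI on balls}), no appeal to infinite volume at the end, and a final coefficient ($3/8$) with room to spare rather than exactly $1$, showing the stated threshold is not sharp for this method. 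The paper's route buys the intermediate global inequality $\int_M|\nabla u|^2\leq C_0\int_M hu^2$, which has independent interest. One point you treat more carefully than the paper: justifying that the defining inequality of $R_0$ holds \emph{at} $R=R_0$ (via continuity and monotonicity of $R\mapsto|u|^2_{L^{2n/(n-2)}(B(R))}-32C_S|u|^2_{L^{2n/(n-2)}(M\backslash B(R))}$), which the paper invokes implicitly with ``by the assumption.''
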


\begin{proof}
For any $R>0$, choose a cut-off function $\psi\leq 1$ supported on $B(2R)$ with $\psi =1$ on $B(R)$ and$|\nabla \psi|\leq \frac{2}{R}$ . Integration by parts yields
\[ \begin{split}
      \int_M |\nabla (\psi u)|^2 = & \int_{B(2R)\backslash B(R)} |\nabla \psi|^2 u^2 - \int_M \psi^2 u\Delta u \\
                                                       \leq &  \int_{B(2R)\backslash B(R)} |\nabla \psi|^2 u^2 +C_0\int_{B(2R)} hu^2 \\
                                                       \leq &  \frac{4(V(2R)-V(R))^\frac{2}{n}}{R^2}\left(\int_{B(2R)\backslash B(R)} u^\frac{2n}{n-2} \right)^\frac{n-2}{n} +C_0\int_{B(2R)} hu^2.
   \end{split}
\]
By letting $R\to \infty$, we get
\[ \int_M |\nabla u|^2 \leq C_0 \int_M hu^2,
\]
when the righthand side is integrable.

Now let $\phi$ be a cut-off function supported on $B(2R_0)$, such that $\phi =1$ on $B(R_0)$, $|\nabla \phi|\leq \frac{\sqrt{2}}{R_0}$. Apply the  Sobolev inequality (\ref{DSI on balls}) to get
\[ \begin{split}
      \left(\int_{B(R_0)}u^\frac{2n}{n-2} \right)^\frac{n-2}{n}\leq & \left( \int_{B(2R_0)}(\phi u)^\frac{2n}{n-2} \right)^\frac{n-2}{n} \\
                                                              \leq & C_S \frac{4R_0^2}{V(2R_0)^\frac{2}{n}}\int_{B(2R_0)} |\nabla (\phi u)|^2 \\
                                                              \leq & C_S \frac{4R_0^2}{V(2R_0)^\frac{2}{n}} \int_{B(2R_0)} 2|\nabla \phi|^2 u^2 + 2\phi^2|\nabla u|^2 \\
                                                              \leq & C_S \frac{8R_0^2}{V(2R_0)^\frac{2}{n}} \left(\frac{2}{R_0^2}\int_{B(2R_0)\backslash B(R_0)} u^2 + \int_{B(2R_0)}|\nabla u|^2\right)  \\
                                                              \leq & C_S \frac{8R_0^2}{V(2R_0)^\frac{2}{n}} \left(\frac{2}{R_0^2}V(2R_0)^\frac{2}{n}\left(\int_{B(2R_0)\backslash B(R_0)} u^\frac{2n}{n-2} \right)^\frac{n-2}{n} + \int_{B(2R_0)}|\nabla u|^2\right)
   \end{split}
\]
By the assumption, we get
\[ \left(\int_{B(R_0)}u^\frac{2n}{n-2} \right)^\frac{n-2}{n} \leq \frac{16C_S R_0^2}{V(2R_0)^\frac{2}{n}} \int_{B(2R_0)} |\nabla u|^2.
\]
Hence
\[ \begin{split}
    \int_M |\nabla u|^2 \leq & C_0 \left( \int_M |h|^\frac{n}{2}\right)^\frac{2}{n} \left[ \left(\int_{B(R_0)} u^\frac{2n}{n-2}\right)^\frac{n-2}{n}+ \left( \int_{M\backslash B(R_0)} u^\frac{2n}{n-2} \right)^\frac{n-2}{n} \right]  \\
                                          \leq & C_0 (1+ \frac{1}{32C_S})\left( \int_M |h|^\frac{n}{2}\right)^\frac{2}{n} \left( \int_{B(R_0)} u^\frac{2n}{n-2}\right)^\frac{n-2}{n} \\
                                          \leq &  C_0 (1+ \frac{1}{32C_S}) \frac{16C_S R_0^2}{V(2R_0)^\frac{2}{n}}\left( \int_M |h|^\frac{n}{2}\right)^\frac{2}{n} \int_{B(2R_0)} |\nabla u|^2.
    \end{split}
\]
Under the assumption on $h$, we get $\nabla u \equiv 0$, hence $u\equiv 0$ given its integrability.
\end{proof}

Similarly we can apply (\ref{DPI on balls}) to prove:

\begin{theorem}\label{vanishing theorem by local poincare}
Suppose $u\in L^{2}$ . Let
\[ R_0= \inf\{R| |u|_{L^2(B(R))}^2 \geq 32C_P |u|_{L^2(M\backslash B(R))}^2\}.
\]
If
\[ \sup_M |h| < C_0^{-1}(\frac{1}{2}+ 16C_P)^{-1}R_0^{-2},
\]
then $u\equiv 0$.
\end{theorem}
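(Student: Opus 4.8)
The plan is to mirror the proof of Theorem~\ref{vanishing theorem by local sobolev}, substituting the Dirichlet Poincar\'e inequality (\ref{DPI on balls}) for the Sobolev inequality (\ref{DSI on balls}), and the $L^\infty$ control of $h$ for the $L^{n/2}$ control. The argument splits into a \emph{global energy estimate} and a \emph{localized Poincar\'e estimate at the scale $R_0$}, which are then combined so that the hypothesis on $\sup_M|h|$ forces the energy to vanish.

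First I would establish the same global Caccioppoli-type bound as before, namely
\[ \int_M |\nabla u|^2 \leq C_0 \int_M h u^2. \]
For $R>0$ take $\psi\leq 1$ supported on $B(2R)$ with $\psi=1$ on $B(R)$ and $|\nabla\psi|\leq 2/R$, and integrate by parts using $-\Delta u\leq C_0 h u$, exactly as in the previous theorem, to obtain
\[ \int_M |\nabla(\psi u)|^2 \leq \frac{4}{R^2}\int_{M\setminus B(R)} u^2 + C_0\int_{B(2R)} h u^2. \]
Since $u\in L^2$, the tail $\int_{M\setminus B(R)} u^2\to 0$ and is further damped by the factor $R^{-2}$, while $C_0\int_M h u^2\leq C_0\sup_M|h|\,\|u\|_{L^2}^2<\infty$; letting $R\to\infty$ then gives the claimed bound.

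Next I would localize at $R_0$. Choose $\phi=1$ on $B(R_0)$, supported on $B(2R_0)$, with $|\nabla\phi|\leq \sqrt{2}/R_0$, and apply (\ref{DPI on balls}) on $B(2R_0)$ to $\phi u$, splitting $|\nabla(\phi u)|^2\leq 2|\nabla\phi|^2u^2+2\phi^2|\nabla u|^2$, to get
\[ \int_{B(R_0)} u^2 \leq C_P(2R_0)^2\int_{B(2R_0)}|\nabla(\phi u)|^2 \leq 16 C_P\int_{B(2R_0)\setminus B(R_0)} u^2 + 8C_P R_0^2\int_M |\nabla u|^2. \]
The factor $32 C_P$ built into the definition of $R_0$ is tuned precisely so that $\int_{M\setminus B(R_0)}u^2\leq \frac{1}{32C_P}\int_{B(R_0)}u^2$ absorbs the annular term into the left-hand side, yielding $\int_{B(R_0)} u^2\leq 16 C_P R_0^2\int_M |\nabla u|^2$. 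Combining this with the tail bound $\int_M u^2\leq(1+\frac{1}{32C_P})\int_{B(R_0)}u^2$ and the global estimate of the previous step produces
\[ \int_M u^2 \leq \left(\frac{1}{2}+16C_P\right) C_0 R_0^2 \sup_M|h|\;\int_M u^2, \]
and the hypothesis $\sup_M|h| < C_0^{-1}(\frac{1}{2}+16C_P)^{-1}R_0^{-2}$ makes the coefficient strictly less than $1$, forcing $\int_M u^2=0$, i.e.\ $u\equiv 0$.

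The main obstacle is the constant bookkeeping: one must check that the $32 C_P$ threshold in the definition of $R_0$, together with the factors $C_P(2R_0)^2$ and $|\nabla\phi|^2\leq 2/R_0^2$, combines to exactly the factor $\frac{1}{2}+16C_P$ appearing in the stated bound for $\sup_M|h|$, so that the threshold is sharp rather than merely sufficient. A secondary technical point is justifying the $R\to\infty$ passage in the global step with only $L^2$ control of $u$, in contrast to the $L^{2n/(n-2)}$ integrability used in the Sobolev version; this causes no trouble precisely because the cut-off gradient contributes the decaying factor $R^{-2}$ acting against the vanishing $L^2$ tail.
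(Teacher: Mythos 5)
Your proposal is correct and is exactly the argument the paper intends: the paper gives no separate proof of this theorem, stating only that it follows ``similarly'' from the proof of Theorem \ref{vanishing theorem by local sobolev} with the local Poincar\'e inequality (\ref{DPI on balls}) in place of (\ref{DSI on balls}), which is precisely what you carried out, and your constant bookkeeping ($16C_P$ absorption via the $32C_P$ threshold, yielding the factor $\frac{1}{2}+16C_P$) matches the stated hypothesis exactly. One small bonus of the $L^2$ setting, which you implicitly exploit, is that the final self-improving inequality bounds $\int_M u^2$ by itself, so $u\equiv 0$ follows at once without the infinite-volume step needed in the Sobolev version to pass from $\nabla u\equiv 0$ to $u\equiv 0$.
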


From now on we will focus on Ricci-flat manifolds. The Riemann curvature tensor of a Ricci-flat manifold satisfies an elliptic system:
\[ -\Delta Rm= Q(Rm),\]
where $Q(Rm)$ is a quadratic term of $0^\text{th}$ order. Moreover, there is a refined Kato's inequality (\cite{Bando-Kasue-Nakajima1989}\cite{Calderbank-Gauduchon-Herzlich2000}):

\[|\nabla |Rm||^2 \leq \frac{n-1}{n+1} |\nabla Rm|^2,
\]
where $n\geq 4$ is the dimension. This inequality leads to the following lemma:

\begin{lemma}\label{refined Kato inequality}
Let $(M,g)$ be a Ricci-flat Riemannian manifold of dimension $n\geq 4$, then for any $\alpha \geq \frac{n-3}{n-1}$,
\begin{equation}\label{refined pde for Rm}
-\Delta |Rm|^\alpha \leq C(n)\alpha |Rm|^{\alpha+1},
\end{equation}
where $C(n)$ is a constant depending only on $n$.
\end{lemma}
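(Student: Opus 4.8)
The plan is to combine the Weitzenb\"ock-type identity for the curvature tensor with the refined Kato inequality, carrying out the pointwise computation first on the open set $\{|Rm|>0\}$ and then promoting it to a distributional inequality on all of $M$. Writing $u=|Rm|$, the starting point is $\frac{1}{2}\Delta|Rm|^2=\langle Rm,\Delta Rm\rangle+|\nabla Rm|^2$. Rewriting the left-hand side via $|Rm|^2=u^2$ as $u\Delta u+|\nabla u|^2$ gives, wherever $u>0$, the identity
\[u\Delta u=\langle Rm,\Delta Rm\rangle+|\nabla Rm|^2-|\nabla u|^2.\]

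Next I would substitute the elliptic equation $\Delta Rm=-Q(Rm)$ and use that $Q$ is quadratic of zeroth order, so that $|\langle Rm,\Delta Rm\rangle|=|\langle Rm,Q(Rm)\rangle|\leq C(n)u^3$. Applying the refined Kato inequality in the rearranged form $|\nabla Rm|^2\geq\frac{n+1}{n-1}|\nabla u|^2$, and using $\frac{n+1}{n-1}-1=\frac{2}{n-1}$, yields
\[u\Delta u\geq -C(n)u^3+\frac{2}{n-1}|\nabla u|^2.\]
To reach the power $\alpha$, I would then apply the chain rule $\Delta u^\alpha=\alpha u^{\alpha-1}\Delta u+\alpha(\alpha-1)u^{\alpha-2}|\nabla u|^2$ and insert the bound above. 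The curvature term produces exactly $C(n)\alpha u^{\alpha+1}$, while the two gradient contributions combine into
\[-\alpha\left(\frac{2}{n-1}+\alpha-1\right)u^{\alpha-2}|\nabla u|^2.\]
This coefficient is nonpositive precisely when $\alpha\geq 1-\frac{2}{n-1}=\frac{n-3}{n-1}$, so dropping this nonpositive term leaves $-\Delta u^\alpha\leq C(n)\alpha u^{\alpha+1}$, which is the claim. Note that the sharp threshold on $\alpha$ is dictated entirely by the Kato constant $\frac{n-1}{n+1}$.

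The main obstacle is regularity: $u=|Rm|$ is only Lipschitz and may fail to be smooth on its zero set, where moreover $u^{\alpha-2}$ can blow up when $\alpha<2$, so the pointwise computation is valid only on $\{u>0\}$. I would therefore establish the inequality distributionally on all of $M$ by regularization, replacing $u$ with $u_\varepsilon=\sqrt{|Rm|^2+\varepsilon^2}$. Since $|\nabla u_\varepsilon|\leq|\nabla|Rm||$, the refined Kato inequality passes to $u_\varepsilon$ with the same constant, and $u_\varepsilon\geq\varepsilon>0$ makes all powers smooth; the identical algebra gives $-\Delta u_\varepsilon^\alpha\leq C(n)\alpha u_\varepsilon^{\alpha+1}$ pointwise. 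Because we retain only the upper bound and discard a nonpositive gradient term, letting $\varepsilon\to0$ preserves the inequality when tested against nonnegative functions, yielding the weak form of (\ref{refined pde for Rm}) that is needed for the integration-by-parts arguments in the subsequent vanishing theorems.
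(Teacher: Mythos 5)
Your proposal is correct, and it is essentially the argument the paper relies on: the paper gives no proof of its own but defers to Minerbe's paper, where the lemma is established by exactly this combination of the Bochner identity $\tfrac{1}{2}\Delta|Rm|^2=\langle Rm,\Delta Rm\rangle+|\nabla Rm|^2$, the elliptic system $-\Delta Rm=Q(Rm)$, and the refined Kato inequality, with the threshold $\alpha\geq\frac{n-3}{n-1}$ arising precisely from the sign of $\frac{2}{n-1}+\alpha-1$. Your regularization via $u_\varepsilon=\sqrt{|Rm|^2+\varepsilon^2}$ correctly handles the nonsmoothness of $|Rm|$ at its zero set and yields the distributional form of the inequality, which is what the subsequent integration-by-parts arguments in the vanishing theorems actually require.
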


\begin{proof}
A detailed proof can be found in \cite{Minerbe2009}.
\end{proof}

With the help of Lemma \ref{refined Kato inequality}, Theorem \ref{vanishing theorem by local sobolev} and \ref{vanishing theorem by local poincare} directly yield the following rigidity results:

\begin{theorem}\label{gap thm by local sobolev}
Let $(M,g)$ be a complete Ricci-flat Riemannian manifold with $|Rm|^\alpha\in L^{\frac{2n}{n-2}}(M)$, for some $\alpha\geq \frac{n-3}{n-1}$. Let
\[R_0=\inf_{p\in M}\{R| |Rm|_{L^\frac{2n\alpha}{n-2}(B_p(R))}^{2\alpha}\geq 32C_S|Rm|_{L^\frac{2n\alpha}{n-2}(M\backslash B_p(R))}^{2\alpha}\}.
\]

If
\[\int_M |Rm|^\frac{n}{2}<\frac{1}{\alpha^\frac{n}{2}C(n)^\frac{n}{2}(\frac{1}{8}+4C_S)^\frac{n}{2}}\frac{V(B_p(2R_0))}{(2R_0)^n},\]
then $(M,g)$ is flat. Here $C(n)$ is the same constant in (\ref{refined pde for Rm}), $C_S$ is the constant in the local Sobolev inequality (\ref{DSI on balls}).
\end{theorem}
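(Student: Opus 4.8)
The plan is to obtain this as an immediate consequence of Theorem \ref{vanishing theorem by local sobolev}, with all the analytic work already contained in that theorem and in Lemma \ref{refined Kato inequality}. A Ricci-flat manifold satisfies $Ric(g)=0\geq 0$, so it lies in the setting of Theorem \ref{vanishing theorem by local sobolev}. I would set $u=|Rm|^\alpha$ and $h=|Rm|$. Since $\alpha\geq\frac{n-3}{n-1}$, Lemma \ref{refined Kato inequality} applies and gives
\[ -\Delta u=-\Delta|Rm|^\alpha\leq C(n)\alpha|Rm|^{\alpha+1}=C(n)\alpha\,hu, \]
so $u$ is a subsolution of the required type with $C_0=\alpha C(n)$, and the hypothesis $|Rm|^\alpha\in L^{2n/(n-2)}(M)$ is exactly $u\in L^{2n/(n-2)}(M)$.

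The remaining step is to match the data of the two statements. For any base point $p$ and any radius $R$,
\[ |u|_{L^{2n/(n-2)}(B_p(R))}^2=\left(\int_{B_p(R)}|Rm|^{2n\alpha/(n-2)}\right)^{(n-2)/n}=|Rm|_{L^{2n\alpha/(n-2)}(B_p(R))}^{2\alpha}, \]
and likewise over $M\backslash B_p(R)$, so the threshold radius produced by Theorem \ref{vanishing theorem by local sobolev} for this $u$ is precisely the $R_0$ defined in the present statement. Substituting $C_0=\alpha C(n)$ and $h=|Rm|$ into the smallness condition of that theorem turns it verbatim into the hypothesis stated here, whence Theorem \ref{vanishing theorem by local sobolev} forces $u\equiv 0$, i.e. $Rm\equiv 0$ and $(M,g)$ is flat.

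Two points need a little care rather than real difficulty. First, the conclusion $Rm\equiv 0$ of Theorem \ref{vanishing theorem by local sobolev} is independent of the (fixed) base point used to apply it, so it is enough for the smallness hypothesis to hold for one favorable $p$; the infimum over $p$ in the definition of $R_0$ merely records this freedom, and since $V(r)/r^n$ is nonincreasing a smaller $R_0$ only relaxes the bound, so optimizing $p$ is advantageous. Second, Lemma \ref{refined Kato inequality} is stated for $n\geq 4$, which is the range the argument covers; the case $n=3$ requires no curvature hypothesis at all, because a three-dimensional Ricci-flat metric is automatically flat, its full curvature tensor being algebraically determined by the Ricci tensor. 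Thus the only thing to execute carefully is the exponent-and-constant bookkeeping identifying the two sets of hypotheses, together with the clean treatment of the base-point infimum.
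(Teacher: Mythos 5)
Your proposal is correct and takes essentially the same route as the paper, whose entire proof is the one-line observation that Lemma \ref{refined Kato inequality} lets one apply Theorem \ref{vanishing theorem by local sobolev} with $u=|Rm|^\alpha$, $h=|Rm|$ (hence $C_0=\alpha C(n)$), the norm identity $|u|_{L^{2n/(n-2)}}^2=|Rm|_{L^{2n\alpha/(n-2)}}^{2\alpha}$ making the two definitions of $R_0$ coincide. Your additional remarks on the base-point infimum and on the case $n=3$ (where Ricci-flat already implies flat) are correct details that the paper leaves implicit.
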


\begin{theorem}\label{gap thm by local poincare}
Let $(M,g)$ be a complete Ricci-flat Riemannian manifold with $|Rm|\in L^{2\alpha}(M)$ for some $\alpha \geq \frac{n-3}{n-1}$. Let
\[ R_0= \inf_{p\in M}\{R| |Rm|_{L^{2\alpha}(B_p(R))}^{2\alpha} \geq 32C_P |Rm|_{L^{2\alpha}(M\backslash B_p(R))}^{2\alpha}\}.
\]
If
\[ \sup_M |Rm| < \alpha^{-1}C(n)^{-1}(\frac{1}{2}+ 16C_P)^{-1}R_0^{-2},
\]
 then $(M,g)$ is flat. Here $C(n)$ is the same constant as in (\ref{refined pde for Rm}), $C_P$ is the constant in the local Poincare inequality (\ref{DPI on balls}).
\end{theorem}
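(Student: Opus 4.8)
The plan is to obtain this statement as an immediate consequence of Theorem \ref{vanishing theorem by local poincare} together with the refined Kato inequality of Lemma \ref{refined Kato inequality}; the only substantive task is to match the hypotheses and track the constants. First I would set
\[u=|Rm|^\alpha,\quad h=|Rm|,\quad C_0=C(n)\alpha,\]
where $C(n)$ is the constant appearing in (\ref{refined pde for Rm}). With these choices the differential inequality $-\Delta u\leq C_0\,h\,u$ required by the setup preceding Theorem \ref{vanishing theorem by local poincare} is exactly (\ref{refined pde for Rm}), which is valid (in the weak, almost-everywhere sense) precisely because $\alpha\geq\frac{n-3}{n-1}$.

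Next I would translate the integrability and the geometric quantities. Since $|Rm|\in L^{2\alpha}(M)$ we have $u=|Rm|^\alpha\in L^2(M)$, so the standing assumption $u\in L^2$ of Theorem \ref{vanishing theorem by local poincare} holds. Because $|u|_{L^2(B(R))}^2=|Rm|_{L^{2\alpha}(B(R))}^{2\alpha}$, and similarly on $M\backslash B(R)$, the scale $R_0(p)$ produced by Theorem \ref{vanishing theorem by local poincare} for a fixed base point $p$ is exactly the quantity inside the infimum here, so the present $R_0=\inf_{p}R_0(p)$. Substituting $C_0=C(n)\alpha$ then converts the threshold $\sup_M|h|<C_0^{-1}(\frac{1}{2}+16C_P)^{-1}R_0^{-2}$ verbatim into the stated bound on $\sup_M|Rm|$, with $C_P$ the constant of the local Poincare inequality (\ref{DPI on balls}).

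The one point demanding a little care is the infimum over $p$: Theorem \ref{vanishing theorem by local poincare} applies at a single base point, so I would exploit the strictness of the hypothesis. Writing the assumption as $\sup_M|Rm|<\alpha^{-1}C(n)^{-1}(\frac{1}{2}+16C_P)^{-1}R_0^{-2}$ with $R_0=\inf_p R_0(p)$, by definition of the infimum there is a point $p$ with $R_0(p)$ close enough to $R_0$ that the same strict inequality persists with $R_0(p)$ in place of $R_0$. Applying Theorem \ref{vanishing theorem by local poincare} at that $p$ forces $u\equiv 0$, i.e. $Rm\equiv 0$, so $(M,g)$ is flat.

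Finally I would remark that Lemma \ref{refined Kato inequality} (hence (\ref{refined pde for Rm})) requires $n\geq 4$; in dimensions $n\leq 3$ every Ricci-flat manifold is already flat, so the statement holds trivially there. In short, I expect no genuine obstacle: the argument is essentially bookkeeping, and the only care needed is to propagate the constant $C_0=C(n)\alpha$ consistently through both the differential inequality and the threshold for $\sup_M|Rm|$, together with the passage from the pointwise $R_0(p)$ to the infimum $R_0$.
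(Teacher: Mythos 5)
Your proposal is correct and is exactly the paper's own argument: the paper proves this theorem in one line by applying Theorem \ref{vanishing theorem by local poincare} with $u=|Rm|^\alpha$, $h=|Rm|$, $C_0=C(n)\alpha$ via Lemma \ref{refined Kato inequality}. The additional points you spell out --- passing from the infimum $R_0=\inf_p R_0(p)$ to a single base point using strictness of the hypothesis, and disposing of $n\leq 3$ where Ricci-flat already implies flat --- are details the paper leaves implicit, handled correctly.
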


\begin{proof}[Proof of Theorem \ref{gap thm by local sobolev} and \ref{gap thm by local poincare}]

By Lemma \ref{refined Kato inequality},we can directly apply Theorem \ref{vanishing theorem by local sobolev} and \ref{vanishing theorem by local poincare} with $u=|Rm|^\alpha$, $h=|Rm|$ to prove Theorem \ref{gap thm by local sobolev} and \ref{gap theorem for ricci-flat manifolds}.
\end{proof}

To prove Theorem \ref{gap theorem for ricci-flat manifolds}, we need to recall the regularity estimates in \cite{Carron2010}, where Carron defined the following:

\begin{definition}\label{gamma k regular}
A Riemannian manifold $(M,g)$ is $(\Gamma,k)$ regular if for any $x\in M$, $r>0$ and $\delta \in (0,1)$ such that
\[\sup_{B_x(\delta r)} |Rm| \leq \frac{1}{r^2},\]
there are estimates for up to $k^\text{th}$ order covariant derivatives of the curvature:
\[ \sup_{B_x(\frac{1}{2}\delta r)} |\nabla^j Rm|\leq \frac{\Gamma}{\delta^j r^{j+2}}, j=1,2,...,k.\]
\end{definition}

\begin{lemma}[G. Carron]\label{regularity of einstein metric}
If $(M,g)$ is Ricci-flat, then $(M,g)$ is $(\Gamma,1)$ regular with $\Gamma$ depending only on the dimension $n$.
\end{lemma}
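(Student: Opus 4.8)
The plan is to prove this intrinsically, as an interior gradient estimate for the curvature obtained from the elliptic system $-\Delta Rm = Q(Rm)$ by a Bernstein-type maximum principle argument, i.e. the static analogue of Shi's derivative estimates. The essential structural point is that no injectivity radius or non-collapsing hypothesis is assumed, so I avoid harmonic coordinates altogether and work only with the curvature PDE and the Laplacian comparison coming from $Ric \ge 0$. As a first reduction I rescale: under the homothety $g \mapsto (\delta r)^{-2} g$ the ball $B_x(\delta r)$ becomes $B_x(1)$, the hypothesis $\sup_{B_x(\delta r)}|Rm| \le r^{-2}$ becomes $\sup_{B_x(1)}|Rm| \le \delta^2$, and the target bound $\sup_{B_x(\delta r/2)}|\nabla Rm| \le \Gamma \delta^{-1} r^{-3}$ becomes $\sup_{B_x(1/2)}|\nabla Rm| \le \Gamma \delta^2$. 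Since Ricci-flatness is scale invariant, it therefore suffices to establish a clean linear estimate: there is $\Gamma(n)$ so that for any complete Ricci-flat $(M,g)$ with $\epsilon := \sup_{B_x(1)}|Rm| \le 1$, one has $\sup_{B_x(1/2)}|\nabla Rm| \le \Gamma(n)\,\epsilon$.

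Next I record two Bochner inequalities. Writing $u = |Rm|^2$ and $w = |\nabla Rm|^2$, the equation $\Delta Rm = Rm \ast Rm$, valid precisely because $Ric \equiv 0$ annihilates the $\nabla^2 Ric$ term, together with the Weitzenb\"ock formula and the commutation of covariant derivatives gives, on $B_x(1)$ where $|Rm| \le 1$,
\[ \Delta u \ge 2w - C(n)\,|Rm|^3, \qquad \Delta w \ge 2|\nabla^2 Rm|^2 - C(n)\,|Rm|\,w, \]
the error terms being schematically of the form $Rm \ast Rm \ast Rm$ and $Rm \ast \nabla Rm \ast \nabla Rm$ and hence controlled using $|Rm|\le 1$.

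Then I run the localized maximum principle. Take a cutoff $\phi = \phi(d(x,\cdot))$ with $\phi \equiv 1$ on $B_x(1/2)$, supported in $B_x(1)$, and $|\nabla\phi| + |\phi''| \le C(n)$; since $Ric \ge 0$ the comparison $\Delta r \le (n-1)/r$ holds in the barrier sense, so on the annulus $\{1/2 \le r \le 1\}$ one gets $|\Delta\phi| \le C(n)$. On the compact ball $\overline{B_x(1)}$, whose compactness is guaranteed by completeness, I apply the maximum principle to $G = \phi^2 w + \Lambda u$ with $\Lambda = \Lambda(n)$ a large dimensional constant. Since $\phi$ vanishes on $\partial B_x(1)$ the boundary contribution is the harmless $\Lambda u \le \Lambda \epsilon^2$, so either $G$ is already bounded by this or its maximum is interior. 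At an interior maximum $x_0$ the favorable term $2\Lambda w$ coming from $\Delta u$ dominates the collected zeroth-order-in-$w$ terms (the $-C|Rm|w$ from $\Delta w$, the $w\Delta\phi^2$ term, and the cross term), while the first-order cross term from $\nabla\phi^2$ is bounded by $\phi^2|\nabla^2 Rm|^2 + C|\nabla\phi|^2 w$ via Young's inequality and absorbed into the good Hessian term $2\phi^2|\nabla^2 Rm|^2$. Choosing $\Lambda = \Lambda(n)$ large forces $w(x_0) \le C(n)\epsilon^2$, whence $\sup_{B_x(1/2)} w = \sup \phi^2 w \le G(x_0) \le C(n)\epsilon^2$, that is $\sup_{B_x(1/2)}|\nabla Rm| \le \Gamma(n)\,\epsilon$. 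Undoing the rescaling yields $(\Gamma,1)$-regularity with $\Gamma = \Gamma(n)$.

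I expect the main obstacle to be the bookkeeping in the maximum principle step: fixing $\Lambda(n)$ so that the good term from $\Delta u$ beats all the zeroth-order terms in $\Delta w$ and in $w\Delta\phi^2$, and absorbing every first-order cross term into the single available good term $|\nabla^2 Rm|^2$ while tracking constants carefully enough that the final bound is genuinely linear in $\sup|Rm|$. A secondary but standard technicality is the non-smoothness of the distance function at the cut locus, which I would handle by Calabi's trick, using an upper barrier obtained by perturbing the base point so that the comparison bound on $\Delta\phi$ remains valid at the maximum point.
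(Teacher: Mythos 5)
Your proposal is correct, but it takes a different route from the paper: the paper does not actually prove this lemma, it simply cites Carron's work, where the result is established in the more general setting of critical metrics, and the paper only remarks in passing that for Einstein metrics the lemma ``can be seen as an elliptic version of Shi's estimate.'' What you have done is carry out that elliptic Shi estimate in full: the scaling reduction to $\sup_{B_x(1)}|Rm|=\epsilon\le 1 \Rightarrow \sup_{B_x(1/2)}|\nabla Rm|\le\Gamma(n)\epsilon$ is the right normalization of the $(\Gamma,1)$ condition; the two Bochner inequalities are valid precisely because Ricci-flatness kills the $\nabla^2 Ric$ terms in the curvature equation and in the commutators; and the maximum principle applied to $G=\phi^2 w+\Lambda u$, with the cross term absorbed into $2\phi^2|\nabla^2 Rm|^2$ by Young's inequality and $\Lambda(n)$ chosen large, does close up to give a bound linear in $\epsilon$, with Calabi's trick covering the cut locus. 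What your approach buys is self-containedness: it avoids harmonic coordinates entirely and needs no injectivity radius or non-collapsing hypothesis, exactly as you emphasize, and it avoids invoking Ricci flow (the genuinely parabolic Shi estimate). What Carron's approach buys is generality: it yields $(\Gamma,k)$ regularity for all $k$ and applies to critical metrics, where the curvature system is not as clean and your Bernstein argument in this simple form would not suffice; note also that for Theorem \ref{gap theorem for ricci-flat manifolds} only the Ricci-flat case and $k=1$ are used, so your argument fully suffices for the paper's purposes.

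One small imprecision worth fixing: with only $Ric\ge 0$ you get a one-sided Laplacian comparison $\Delta r\le (n-1)/r$, hence for a cutoff $\phi=\eta(r)$ with $\eta'\le 0$ you obtain only the lower bound $\Delta\phi\ge -C(n)$ on the annulus, not the two-sided bound $|\Delta\phi|\le C(n)$ that you state. This is harmless for your argument, since the term $w\,\Delta\phi^2$ enters with $w\ge 0$ and only needs to be bounded below so that it can be absorbed by the good term $2\Lambda w$; but the write-up should claim only the one-sided bound.
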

 \begin{proof}
 This lemma is proved in the more general setting of critical metrics in \cite{Carron2010}. In the special case of Einstein metrics, it can be seen as an elliptic version of Shi's estimate \cite{Shi1989}.  See \cite{Carron2010} for details.
 \end{proof}

Now we can prove Theorem \ref{gap theorem for ricci-flat manifolds}.

\begin{proof}[Proof of Theorem \ref{gap theorem for ricci-flat manifolds}]

When $n\leq 3$, a Ricci-flat manifold is necessarily flat. When $n\geq 4$, observe that $\frac{n-2}{4} >\frac{n-3}{n-1}$, so we can apply Theorem \ref{gap thm by local sobolev} with $\alpha=\frac{n-2}{4}$, provided we can estimate both the smallness and the decay rate of $|Rm|_{L^\frac{n}{2}}$.

To find a region near $p$ with large curvature, we use a point picking process.
Let
\[ r=\sup\{l>0| \sup_{B_p(l)}|Rm|\leq \frac{1}{Ll^2}\},
\]
for some $L\geq 1$ to be determined later.
Then there exists $x_0\in  \overline{B_p(r)}$ such that
\[ |Rm|(x_0)=\frac{1}{Lr^2}.
\]
Let $r_0=\frac{r}{2}$. For each integer $i\geq 0$, if $\sup_{B_{x_i}(\frac{r_i}{2})}|Rm|\leq \frac{4}{Lr_i^2}$, then stop. Otherwise there exists $x_{i+1} \in B_{x_i}(\frac{r_i}{2})$ such that
\[ |Rm|(x_{i+1})=\frac{4}{Lr_i^2}.
\]
Let $r_{i+1}=\frac{r_i}{2}$. If this process doesn't stop in finite steps, then we can find a sequence $x_i,r_i$,$i=0,1,2,...$ such that
\[r_i=\frac{r}{2^{i+1}},\]
\[|Rm|(x_{i+1})=\frac{1}{Lr_i^2}=\frac{4^{i+1}}{Lr^2},\]
\[d(x_0,x_{i+1})\leq \sum_{j=1}^{i+1} r_j  \leq \frac{r}{2},\]
leading to a contradiction since $\sup_{B_{x_0}(r)}|Rm|< \infty$.
Therefore the point picking process must stop at a finite step $N$, denote $\bar{x}=x_N$ and $\bar{r}=r_{N+1}$, then
\[|Rm|(\bar{x})=\frac{1}{4L\bar{r}^2},\]
\[\sup_{B_{\bar{x}}(\bar{r})} \leq \frac{1}{L\bar{r}^2}.\]
By Lemma \ref{regularity of einstein metric},
\[\sup_{B_{\bar{x}}(\frac{\bar{r}}{2})} |\nabla Rm| \leq \frac{\Gamma}{\bar{r}^3}.\]
Hence for any $y\in B_{\bar{x}}(\frac{\bar{r}}{8L\Gamma})$,
\[ |Rm|(y)\geq \frac{1}{4L\bar{r}^2} - \frac{\Gamma}{\bar{r}^3} \frac{\bar{r}}{8L\Gamma}=\frac{1}{8L\bar{r}^2}.
\]
Denote $\tilde{r}=\frac{\bar{r}}{8L\Gamma}$. Since $B_{\bar{x}}(\tilde{r}) \subset B_p(2r)$, we have
\[ \int_{B_p(2r)} |Rm|^\frac{n}{2}\geq \int_{B_{\bar{x}}(\tilde{r})} |Rm|^\frac{n}{2} \geq \frac{1}{(8L)^\frac{3n}{2}\Gamma^n} \rho_{\bar{x}}(\tilde{r}) \geq  \frac{1}{(\frac{7}{2})^n(8L)^\frac{3n}{2}\Gamma^n} \rho_{p}(2r),
\]
the last inequality comes from the Bishop-Gromov volume comparison theorem. Since $\rho_{p}(r)$ is non-increasing, the above inequality implies
\begin{equation}\label{vanishing condition 1}
\begin{split}
\int_{B_p(2r)}|Rm|^\frac{n}{2} \geq & \frac{1}{\epsilon(\frac{7}{2})^n(8L)^\frac{3n}{2}\Gamma^n} \rho_{p}(2r) \int_M |Rm|^\frac{n}{2} \rho_p^{-1} \\
                                \geq & \frac{1}{\epsilon(\frac{7}{2})^n(8L)^\frac{3n}{2}\Gamma^n} \int_{M\backslash B_p(2r)} |Rm|^\frac{n}{2} .
\end{split}
\end{equation}

On the other hand,
\[ \int_{B_p(r)} |Rm|^\frac{n}{2} \leq \left( \frac{1}{Lr^2} \right)^\frac{n}{2} V(B_p(r)) \leq \frac{2^n}{L^\frac{n}{2}} \rho_p(2r),
\]
and
\[ \int_{M\backslash B_p(r)} |Rm|^\frac{n}{2} \leq \rho_p(r) \int_{M\backslash B_p(r)} |Rm|^\frac{n}{2} \rho_p^{-1} \leq \epsilon 2^n \rho_p(2r),
\]
hence
\begin{equation} \label{vanishing condition 2}
\int_M |Rm|^\frac{n}{2} \leq \left( \frac{2^n}{L^\frac{n}{2}}+\epsilon 2^n \right)2^n \rho_p(4r).
\end{equation}

 If we first choose $L$ large enough, and then choose $\epsilon$ small enough, all depending only on $n$, then (\ref{vanishing condition 1}) and (\ref{vanishing condition 2}) together with the monotonicity of $\rho_p(r)$ imply that the conditions in Theorem \ref{gap thm by local sobolev} are satisfied, in particular $R_0\leq 2r$. Therefore $Rm\equiv 0$.
\end{proof}

\bibliographystyle{plain}
\bibliography{ref}

\end{document}